\documentclass[a4paper,10pt]{amsart}

\usepackage[cp850]{inputenc}
\usepackage{marvosym}
\usepackage{wasysym}
\usepackage{latexsym}
\usepackage{amsfonts}
\usepackage{amssymb}
\usepackage{amsmath}
\usepackage{mathrsfs}
\usepackage{color}
\usepackage[colorlinks]{hyperref}
\usepackage{fancyhdr}
 
\pagestyle{fancy}
\setlength{\headheight}{12pt} 
\fancyhf{}
\fancyhead[CE]{Stability for weighted composition $C_0$-semigroups on Lebesgue and Sobolev spaces}
\fancyhead[CO]{Javier Aroza and Elisabetta Mangino}
\fancyfoot[CE,CO]{\thepage}

\newtheorem{theorem}{Theorem}[section]
\newtheorem{lemma}[theorem]{Lemma}

\newtheorem{corollary}[theorem]{Corollary}

%% Los siguientes entornos son con body en roman

  \newtheorem{example}[theorem]{Example}

  \newtheorem{remark}[theorem]{Remark}

%%%%%%%%%%%%%%%%%%%%%%%%%%%%%%%%%%%%%%%%%%%%%%%%%%%%%%%%%%%%%%%%%%%%%%%%%
%% Comandos nuevos \'{u}tiles para f\'{o}rmulas matem\'{a}ticas
%% Se deben utilizar en modo matem\'{a}tico

\newcommand{\C}{\mathbb{C}}

\newcommand{\R}{\mathbb{R}}

\newcommand{\N}{\mathbb{N}}

\newcommand{\1}{1\!\! 1}

%\def\proof{{\scshape Proof.}}

%\renewcommand{\theequation}{\thesection.\arabic{equation}}
%\renewcommand{\theequation}{\thechapter.\arabic{equation}}

%\def\theequation{\arabic{section}.\arabic{equation}}
%\def\theequation{\arabic{chapter}.\arabic{equation}}

%\nomathskip

% Absolute value notation

% Norm notation
\newcommand{\norm}[1]{\left\lVert#1\right\rVert}

\title{Stability for weighted composition $C_0$-semigroups on Lebesgue and Sobolev spaces}
\author{Javier Aroza}
\address{Javier Aroza, Departament de Matem\`atiques - Analisi Matem\`atica, Universitat Jaume I de Castell\'o, 
E-12071 Castell\'o de la Plana}
 \email{aroza@uji.es}
\author{Elisabetta Mangino}
\address{Elisabetta  Mangino, Dipartimento di Matematica e Fisica "E. De Giorgi", Universit\`a del Salento, I-73100 Lecce, Italy}
\email{elisabetta.mangino@unisalento.it}

\thanks{This study is supported in part by Generalitat Valenciana, Project
PROMETEO/2008/101. The first author acknowledges the support of grant
MHE2011-00201 from Ministerio de Educaci\'on. The authors thank Thomas Kalmes for discussions on  the topic of this paper.}

\begin{document}

\begin{abstract} Stability of weighted composition strongly continuous  semigroups acting on Lebesgue and Sobolev  spaces is studied,  without the use of spectral conditions on the generator of the semigroup. Applications to the generalized von Foerster - Lasota semigroup and  a comparison with hypercylicity conditions are presented.\\
MSC (2010): Primary 47D06, 47A16; Secondary 35F10

\end{abstract}

\maketitle

Let $\Omega\subseteq\R^N$ be an open set 
and let $\varphi:[0,\infty[\times \Omega \rightarrow \Omega$ be  a semiflow, i.e.  a continuous function such that  $\varphi(0,\cdot)=id_\Omega$, $\varphi(t,\cdot)\circ \varphi(s,\cdot )=\varphi(t +s,\cdot)$ for all $t,s\geq 0$ and such that $\varphi(t, \cdot)$ is injective for all $t\geq 0$. For example, $\varphi$ could be the solution  of the initial value problem
\begin{equation}\label{flow} x' =F(x), x(0)=x_0\end{equation}
on $\Omega$, where $F:\Omega\rightarrow \R$ is a locally Lipschitz continuous vector field over  $\Omega$ and $x_0\in \Omega$.

Moreover let $h:\Omega\rightarrow \R$ be a continuous function and 
define for every $t\geq 0$:
\[h_t:\Omega\rightarrow (0,\infty), x\mapsto\exp(\int_0^t h(\varphi(s,x))\,ds).\] 

Denoting by  $\lambda$ be the $N$-dimensional Lebesgue measure,   let $\rho: \Omega \rightarrow (0,+\infty)$ be a  $\lambda$-measurable locally integrable function and consider, for every $1\leq p<\infty$, the space 
$$
L^p_\rho(\Omega)=\{ u: \Omega\rightarrow \R\,\mid\, u  \ \lambda-\mbox{measurable and } \Vert u\Vert_p<\infty\},$$
where $\Vert u\Vert_p=\left(\int_{\Omega}\vert u(t)\vert^p\rho(t) dt\right)^{\frac 1 p}$. Finally set

\begin{equation} (T(t)f)(x):=h_t(x)f(\varphi(t,x)), \qquad f\in L^p_\rho(\Omega), \ x\in \Omega,\  t\geq0\label{semi}\end{equation}

If $\varphi$ is continuously differentiable, in  \cite[Proposition 4.12, Theorem 4.7]{kalmes2007hypercyclic},
 it has been proved that  $\mathcal T= (T(t))_{t\geq 0}$ is a $C_0$-semigroup in $L_\rho^p(\Omega)$ if and only if
 there exist $M\geq 1$ and $\omega\in\R$ such that, for every $t\geq 0$,  the following inequality holds 
  \[|h_t|^p\rho \leq Me^{\omega t}\rho(\varphi(t,\cdot  ))|{\rm det} D\varphi(t, \cdot)|\mbox{, } \qquad \lambda\mbox{-a.e.\ on }\Omega,\]
  where $D\varphi(t, \cdot)$ is the Jacobian of $\varphi(t, \cdot)$, or equivalently
\begin{equation*}
  \chi_{\varphi(t, \Omega)}\frac{ h_t^p(\varphi(-t, \cdot)) \rho (\varphi(-t, \cdot))|{\rm det} D\varphi(-t, \cdot)|}{\rho} \leq M e^{t\omega}\qquad   \lambda-\mbox{a.e.},
\end{equation*}
  where $\varphi(-t, \cdot)$ is the inverse function of $\varphi(t, \cdot)$ and $D\varphi(-t, \cdot)$ is its Jacobian. 
  In this case $\rho$ is said to be a $p$-admissible weight and for every $t\geq 0$:
	  \begin{equation}\label{normT} ||T(t)||=\left\Vert \frac{\rho_{t,p}}{\rho}\right\Vert_\infty, \end{equation}
  where
  \[ \rho_{t,p}=\chi_{\varphi(t, \Omega)} |h_t(\varphi(-t, \cdot))|^p \rho (\varphi(-t, \cdot))|{\rm det} D\varphi(-t, \cdot)|.\]

  Our aim is to characterize the  stability of the $C_0$-semigroup $\mathcal T$ on $L^p_\rho(\Omega)$, namely when
  $$ \forall f \in L^p_\rho(\Omega) \qquad \lim_{t\rightarrow +\infty}\|T_t f\|_p=0.$$ 

The classical approach to stability is to analyze the spectrum of the generator of the semigroup (see e.g. \cite{engel_nagel2000one-parameter}). We will instead characterize stability in terms   of the functions $\rho_{t,p}$. 

After studying general conditions in  the multidimensional case,  we will focus on the one-dimensional case,  obtaining  explicit characterizations when the semiflow is associated to a function  $F$ as in \eqref{flow}.   Similar characterizations for weighted composition  semigroups acting on Sobolev spaces $W^{1,p}(a,b)$ are considered and finally, applications to the semigroups associated with the  (generalized)  linear von Foerster-Lasota equations are investigated. In the appendix,  we will compare the stability conditions with the hypercyclicity conditions for the same semigroups   that were obtained in \cite{aroza_kalmes_mangino2013chaotic, kalmes2007hypercyclic, kalmes2009hypercyclic}.

\section{The multidimensional case}
By sake of completeness,  we state an essential known lemma about convergence with respect to the $w^*$-topology in $L^\infty (\Omega, \mu)$  (see e.g. \cite{brezis2011functional}):
  
  \begin{lemma}\label{w*} Let $(\Omega, \mu)$ be  a $\sigma$-finite measure space and  let $\psi:[0,+\infty[ \rightarrow L^\infty(\Omega,\mu)$ be a locally bounded function.
  Then $w^*$-$\lim_{t\to\infty} \psi(t)=0$ if and only if 
  \begin{enumerate}
  \item$ \sup_{t\geq 0} ||\psi(t)||_\infty<+\infty$;
  \item for every $Q\subseteq\Omega$ with $\mu(Q)<+\infty$, it holds
\[ \lim_{t\to +\infty} \int_{Q}\psi(t) d\mu =0.\]
\end{enumerate}
  \end{lemma}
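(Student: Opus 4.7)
The plan is to prove the two directions separately, using the identification $L^\infty(\Omega,\mu)=(L^1(\Omega,\mu))^*$ (valid since $\mu$ is $\sigma$-finite) so that $w^*$-$\lim \psi(t)=0$ means $\int_\Omega \psi(t) g\,d\mu \to 0$ for every $g\in L^1(\Omega,\mu)$.

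For the direction $\Rightarrow$, condition (2) is immediate by testing against the indicator functions $g=\chi_Q$, which lie in $L^1(\Omega,\mu)$ because $\mu(Q)<+\infty$. For condition (1), the argument is by contradiction: if $\sup_{t\geq 0}\|\psi(t)\|_\infty=+\infty$, pick a sequence $t_n$ with $\|\psi(t_n)\|_\infty\to\infty$; local boundedness of $\psi$ forces $t_n\to+\infty$, so $\psi(t_n)\xrightarrow{w^*} 0$. By the Banach--Steinhaus theorem applied in the dual pairing $\langle L^1, L^\infty\rangle$, every weakly-$*$ convergent sequence is norm-bounded, which contradicts $\|\psi(t_n)\|_\infty\to\infty$.

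For the direction $\Leftarrow$, set $M:=\sup_{t\geq 0}\|\psi(t)\|_\infty$, finite by (1). Fix $g\in L^1(\Omega,\mu)$ and $\varepsilon>0$. The key approximation step is to find a simple function $g_0=\sum_{i=1}^n a_i\chi_{Q_i}$ with $\mu(Q_i)<+\infty$ such that $\|g-g_0\|_1<\varepsilon/(2M+1)$; this is possible because such simple functions are dense in $L^1(\Omega,\mu)$ (here $\sigma$-finiteness is again used). Splitting
\[
\left|\int_\Omega \psi(t)g\,d\mu\right| \leq M\|g-g_0\|_1 + \sum_{i=1}^n |a_i|\,\left|\int_{Q_i}\psi(t)\,d\mu\right|,
\]
the first term is controlled by $\varepsilon/2$ and the finitely many integrals in the second sum tend to $0$ by hypothesis (2), so $|\int_\Omega \psi(t)g\,d\mu|<\varepsilon$ for $t$ sufficiently large.

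The only mildly delicate step is the boundedness argument in $\Rightarrow$: one must combine the local boundedness hypothesis with Banach--Steinhaus applied along a subsequence to upgrade pointwise $w^*$-convergence to uniform norm boundedness of $\{\psi(t)\}_{t\geq 0}$. Everything else is a routine density argument in $L^1$.
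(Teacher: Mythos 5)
Your proof is correct. The paper does not actually prove this lemma --- it states it as a known fact and refers to Brezis --- so there is no internal argument to compare against; your proof is the standard one that reference supplies. Both halves are sound: for necessity of (1) you correctly combine local boundedness (to force any norm-unbounded sequence $t_n$ to escape to $+\infty$) with the uniform boundedness principle applied to $\{\psi(t_n)\}\subseteq (L^1)^*$ (legitimate since $L^1(\Omega,\mu)$ is complete and $L^\infty=(L^1)^*$ by $\sigma$-finiteness), and for sufficiency the $\varepsilon/2$-splitting via integrable simple functions, which are dense in $L^1$ and are automatically supported on sets of finite measure, is exactly what is needed.
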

  
\begin{theorem}\label{characterization stability_1}
Let $\Omega$ be an open subset of $\R^N$, $\varphi$ a continuously differentiable semiflow on $\Omega$, $h\in C(\Omega)$, $\rho$ a $p$-admissible function, and let $\mathcal{T}$ be the semigroup on $L^p_\rho (\Omega)$ defined by \eqref{semi}. 
Then  the  following conditions are equivalent:
\begin{itemize}
	\item[(i)] $\mathcal{T}$ is stable on $L^p_\rho (\Omega)$,
	\item[(ii)] It holds:
\begin{enumerate}
\item[(1)] $\mathcal{T}$ is bounded.
\item[(2)] for every bounded interval $Q\subseteq\Omega$
\[ \lim_{t\to+\infty} \int_{Q}\rho_{t,p}(x)dx =0.\]
\end{enumerate}
\end{itemize}
\end{theorem}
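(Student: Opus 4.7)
The plan is to reduce everything to the single identity
\[ \|T(t)f\|_p^p \;=\; \int_{\Omega} |f(y)|^p\,\rho_{t,p}(y)\,dy, \qquad f\in L^p_\rho(\Omega),\]
which follows from the definition of $T(t)$ in \eqref{semi} by the change of variables $y=\varphi(t,x)$, together with the fact that $\varphi(t,\cdot)$ is injective and the support of the transformed integrand is $\varphi(t,\Omega)$. This identity turns the statement about $\|T(t)f\|_p\to 0$ into a statement about the convergence of integrals of $|f|^p$ against the family $(\rho_{t,p})_{t\geq 0}$, which is exactly the shape suited to Lemma \ref{w*}.

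The direction (i)$\Rightarrow$(ii) is the easy one. Stability forces pointwise convergence $T(t)f\to 0$ for every $f$, so by the uniform boundedness principle $\sup_{t\geq0}\|T(t)\|<\infty$, giving (1). For (2), simply apply the displayed identity to $f=\chi_Q$: since $\rho$ is locally integrable and $Q$ is bounded, $\chi_Q\in L^p_\rho(\Omega)$, and $\|T(t)\chi_Q\|_p^p=\int_Q \rho_{t,p}(x)\,dx\to 0$.

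For (ii)$\Rightarrow$(i) the key step is to recast the hypothesis in the language of Lemma \ref{w*}. Take $\mu=\rho\,\lambda$, which is a $\sigma$-finite Borel measure on $\Omega$, and set $\psi(t):=\rho_{t,p}/\rho\in L^\infty(\Omega,\mu)$. Then $\|\psi(t)\|_\infty=\|T(t)\|$ by \eqref{normT}, so (1) gives condition (1) of Lemma \ref{w*}. To get condition (2) of Lemma \ref{w*}, I must pass from ``bounded intervals'' to arbitrary sets $Q$ with $\mu(Q)<\infty$: splitting $Q=(Q\cap B_n)\cup(Q\setminus B_n)$ where $B_n$ is a ball of radius $n$, the first piece is covered by finitely many bounded intervals (so its integral tends to $0$ by hypothesis), while the second is controlled by $\|\psi\|_{\infty,\sup}\cdot \mu(Q\setminus B_n)$, which is small by absolute continuity. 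Lemma \ref{w*} then yields $\int_\Omega g\,\psi(t)\,d\mu\to 0$ for every $g\in L^1(\Omega,\mu)$; applied to $g=|f|^p$ (which lies in $L^1(\mu)$ precisely because $f\in L^p_\rho$) this gives $\|T(t)f\|_p^p\to 0$.

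The only step that needs any care is the reduction just described from ``bounded intervals'' to sets of finite $\mu$-measure, since Lemma \ref{w*} is phrased in terms of the latter; the boundedness in (1) is what makes this reduction work. Everything else is change of variables, the uniform boundedness principle, and a direct application of Lemma \ref{w*} with the measure $\mu=\rho\,\lambda$.
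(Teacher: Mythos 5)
Your proof is correct and follows essentially the same route as the paper's: the change-of-variables identity $\|T(t)f\|_p^p=\int_\Omega|f|^p\rho_{t,p}\,d\lambda$ reduces stability to $w^*$-convergence of $\rho_{t,p}/\rho$ to $0$ in $L^\infty$ with respect to $\mu=\rho\,\lambda$, and Lemma~\ref{w*} finishes the argument. The one piece of extra content is your explicit reduction from bounded intervals to arbitrary sets of finite $\mu$-measure, which the paper leaves implicit; note only that $Q\cap B_n$ is in general covered by \emph{countably} many bounded intervals contained in $\Omega$ (not finitely many, since $\rho$ need not be integrable near $\partial\Omega$), but the tail of that union is again controlled by $\sup_t\|\rho_{t,p}/\rho\|_\infty$ together with $\mu(Q)<\infty$, exactly as in your treatment of $Q\setminus B_n$.
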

									
\begin{proof}
Observe that, by \eqref{normT}, $\mathcal{T}$ is bounded if and only if $\sup_{t\geq 0} \left\Vert \frac{\rho_{t, p}}{\rho}\right\Vert_\infty <+\infty$.

Then $\mathcal{T}$ is stable on $L^p _{\rho}(\Omega)$ if and only if 

\[ \lim_{t\to+\infty} \int_{\Omega} \left\vert h_t(x)f(\varphi(t,x))\right\vert^p \rho(x)dx=0, \qquad \forall f\in L^p_\rho(\Omega),\]
or, equivalently, with a change of variable (see also the argument in  the proof of
 \cite[Proposition 3.11]{kalmes2007hypercyclic}):
\[ \lim_{t\to+\infty} \int_{\Omega} |f(y)|^p \rho_{t, p}(y)dy=0, \qquad \forall f\in L^p_\rho(\Omega)\]
that is
\[ \lim_{t\to+\infty} \int_{\Omega} |g(y)| \rho_{t, p}(y) dy=0, \qquad \forall g\in L^1_\rho(\Omega).\]

By applying the previous to $ g^+=g\vee 0$ and $g^-=g\wedge 0$, we get that it is equivalent to 
\[ \lim_{t\to+\infty} \int_{\Omega} g(x) \frac{\rho_{t, p}(x)}{ \rho(x)}\rho(x)dx=0, \qquad \forall g\in L^1_\rho(\Omega).\] namely,$ \frac{\rho_{t, p}(x)}{ \rho(x)}\rightarrow 0$ with respect to the $w^*$-topology on $L^\infty(\Omega)$ induced  by $L^1_\rho(\Omega)$. Since the function $t\mapsto \rho_{t,p} \in L^\infty(\Omega)$ is locally bounded,  the last assertion  is equivalent to (ii) by Lemma \ref{w*}.
\end{proof}

\begin{remark}
{\rm We could also consider the complex valued function space  $L^p_\rho(\Omega, \C)$, $h\in C(\Omega, \C)$ and 
define for every $f\in L^p_\rho(\Omega, \C)$ and every $t\geq 0$ 
\[T_{h}^{\C}(t)(f)(x)= \exp \left( \int_0^t h(\varphi(s,x))\right) f(\varphi(x,t)).\]
By the quoted results in  \cite{kalmes2007hypercyclic},  if $\rho$ is admissible, then  $ \mathcal T^{\C}_{h}=(T^{\C}_{h}(t))_{t\geq 0}$ is a strongly continuous semigroup on $L^p_\rho(\Omega, \C)$ .
Consider, for every $f\in L^p_\rho(\Omega, \R)$ and $t\geq 0$
\[ T_{{\rm Re}h}^{\R}(t)(f)(x)= \exp \left( \int_0^t{\rm Re}h(\varphi(s,x))\right) f(\varphi(x,t)).\]

It holds that $\mathcal T^{\C}_{h}$ is stable on $L^p_\rho(\Omega, \C)$  if and only if $\mathcal T^{\R}_{{\rm Re}h}$ is stable on $L^p_\rho(\Omega, \R)$.

Indeed,  if $f\in L^p_\rho(\Omega, \R)$, then 
\[|T^{\C}_{h}(t)(f)(x)|=|T^{\R}_{{\rm Re}h}(t)(f)(x)|\]
and we get immediately that if $\mathcal T^{\C}_{h}$ is stable then $\mathcal T^{\R}_{{\rm Re}h}$ is stable.
Conversely, if $f\in L^p_\rho(\Omega, \C)$, then $|f|\in L^p_\rho(\Omega, \R)$ and 
\[ |T^{\C}_{h}(t)(f)(x)|=|T^{\R}_{{\rm Re} h}(t)(|f|)(x)|\]
and  again we get that if $\mathcal T^{\R}_{{\rm Re}h}$ is stable, then $ \mathcal T^{\C}_{h}$ is stable too.

Taking into account the previous consideration, we will only consider the real valued function space $L^p_\rho(\Omega,\R)$.}
\end{remark}

\section{One-dimensional case on Lebesgue spaces}

In the following  sections  we assume that:

\begin{itemize}
 \item[(H1)] $\Omega\subseteq \R$ open, $h\in C(\Omega,\R)$;
 \item[(H2)] $F\in C^1(\Omega,\R)$ and  $\varphi$ is the semiflow associated with $F$;
  i.e. for every $x_0\in\Omega$, $\varphi(\cdot,x_0): J(x_0) \rightarrow \R$  is the unique solution of the initial value problem
\[\dot{x}=F(x),\; x(0)=x_0\]
where the  $J(x_0)\subseteq\R$ is the maximal domain  of $\varphi( \cdot, x_0)$; It is known that $J(x_0)$ is an open interval such that $0\in J(x_0)$;  
\item[(H3)] $[0,\infty)\subset J(x_0)$ for every $x_0\in\Omega$. 
\end{itemize}

\noindent We refer to the monograph of Amann \cite{amann1990an} for further results on the topic of flows.
Let us define the following subsets of $\Omega$:
    \[ \Omega_0:=\{x\in \Omega\,\mid\, F(x)=0\},\qquad \Omega_1=\Omega\setminus\Omega_0.\]

To emphasize the dependence on $F$ and $h$, we will denote the semigroup defined  in \eqref{semi} by  $\mathcal T_{F,h}=(T_{F,h}(t))_{t\geq 0}$.

We recall the following results that were proved in \cite{kalmes2009hypercyclic,aroza_kalmes_mangino2013chaotic}, Lemma 7 and Corollary 12 respectively.

 \begin{lemma}[\cite{aroza_kalmes_mangino2013chaotic}]\label{crucial lemma}
Let $\Omega\subseteq\R, F,h$ satisfying (H1)-(H3) and let $\rho$ be $p$-admissible measurable function for $F$ and $h$.
 \begin{enumerate}
 \item Let $[a,b]\subset\Omega_1$ and set  $\alpha:=a,\beta:=b$ if $F_{|[a,b]}>0$,
respectively $\alpha:=b,\beta:=a$ if $F_{|[a,b]}<0$. There is a constant $C>0$ such that
\[\forall\,x\in[a,b]:\;\frac{1}{C}\leq\rho(x)\leq C\]
and
\[\forall\,t\geq 0, x\in[a,b]:\;\frac{1}{C}\rho_{t,p}(\alpha)\leq\rho_{t,p}(x)\leq C\rho_{t,p}(\beta).\]

\item For all $t\geq 0$ and $x\in\Omega$

\begin{eqnarray*}
\rho_{t,p}(x)&=&\chi_{\varphi(t,\Omega)}(x)\exp\left(p\int_{-t}^0\left[h(\varphi(s,x))-\frac{1}{p}F'(\varphi(s,x))\right] ds\right)\rho(\varphi(-t,x))\\\\
&=&\begin{cases}\exp\left(pt [h(x)-\frac{1}{p}F'(x)]\right)\rho(x),&x\in \Omega_0,\\\\
		\chi_{\varphi(t,\Omega)}(x)\exp\left(p\displaystyle \int_{\varphi(-t,x)}^x\dfrac{h(y)-\frac{1}{p}F'(y)}{F(y)} dy\right)\rho(\varphi(-t,x)),&x\in \Omega_1\end{cases}  
\end{eqnarray*}
\end{enumerate}
\end{lemma}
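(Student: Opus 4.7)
My plan is to establish the explicit formula~(2) first, by direct calculation from the definition of $\rho_{t,p}$, and then to derive the comparison estimates~(1) by combining~(2) with the $p$-admissibility inequality. For~(2), I would start from
\[\rho_{t,p}(x)=\chi_{\varphi(t,\Omega)}(x)\,h_t(\varphi(-t,x))^p\,\rho(\varphi(-t,x))\,|\det D\varphi(-t,x)|,\]
and simplify each factor separately. The cocycle identity $\varphi(s,\varphi(-t,x))=\varphi(s-t,x)$ together with the substitution $u=s-t$ rewrites $h_t(\varphi(-t,x))^p$ as $\exp\bigl(p\int_{-t}^0 h(\varphi(u,x))\,du\bigr)$. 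For the Jacobian, the variational equation of $\dot x=F(x)$ yields the one-dimensional Liouville identity $\partial_y\varphi(t,y)=\exp\bigl(\int_0^t F'(\varphi(s,y))\,ds\bigr)$, and inverting (since $\varphi(-t,\cdot)$ is the inverse of $\varphi(t,\cdot)$) together with the same shift of variable gives $|\det D\varphi(-t,x)|=\exp\bigl(-\int_{-t}^0 F'(\varphi(s,x))\,ds\bigr)$. Multiplying the three factors produces the first form in~(2). The two cases then follow at once: on $\Omega_0$ the orbit is stationary, so the integrands are constant in $s$ and the expression reduces to $\exp(pt[h(x)-F'(x)/p])\rho(x)$; on $\Omega_1$ the orbit cannot meet $\Omega_0$ (fixed points being invariant), hence $F\neq 0$ along it, and the substitution $y=\varphi(s,x)$, $dy=F(y)\,ds$, converts the time integral into the stated line integral.

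For~(1), I would use that $\Omega_1$ is open (complement of $F^{-1}(0)$), so a neighbourhood of the compact set $[a,b]$ still lies in $\Omega_1$; on it, $F$, $F'$ and $h$ are bounded and $F$ is bounded away from $0$. Writing $f(y)=(h(y)-F'(y)/p)/F(y)$, any line integral $\int_A^B f$ with $A,B\in[a,b]$ is bounded by $|B-A|\sup_{[a,b]}|f|$. The key calculation is then the following cancellation: assuming $F>0$ and $x\leq\beta$ in $[a,b]$, applying the splitting $\int_A^B f-\int_C^D f=\int_A^C f+\int_D^B f$ to the line-integral form of~(2) yields
\[\frac{\rho_{t,p}(x)}{\rho_{t,p}(\beta)}=\exp\!\Bigl(p\int_{\varphi(-t,x)}^{\varphi(-t,\beta)} f+p\int_\beta^x f\Bigr)\cdot\frac{\rho(\varphi(-t,x))}{\rho(\varphi(-t,\beta))}.\]
Now I invoke admissibility at time $\tau=\tau(x,\beta)$ (the uniformly bounded flow-time from $x$ to $\beta$) applied at the point $\varphi(-t,\beta)$: using~(2) once more one computes $\rho_{\tau,p}(\varphi(-t,\beta))=\exp\bigl(p\int_{\varphi(-t,x)}^{\varphi(-t,\beta)} f\bigr)\rho(\varphi(-t,x))$, and admissibility bounds this by $Me^{\omega\tau}\rho(\varphi(-t,\beta))$. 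Substituting back, the $t$-dependent line integral cancels and one is left with $\rho_{t,p}(x)/\rho_{t,p}(\beta)\leq Me^{\omega\tau}\exp\bigl(p\int_\beta^x f\bigr)$, uniformly bounded in $t$. The lower bound $\rho_{t,p}(x)\geq\rho_{t,p}(\alpha)/C$ is obtained symmetrically using $\tau(\alpha,x)$, and the pointwise bounds on $\rho$ follow from the same combination of~(2) and admissibility applied directly on $[a,b]$ (varying one endpoint to reach $a$ from above or $b$ from below).

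The most delicate step I anticipate is upgrading the bounds on $\rho$ from almost-everywhere to pointwise, since admissibility is stated a.e.\ and $\rho$ is only assumed measurable; this is handled by passing to a suitable locally bounded representative of $\rho$ on $\Omega_1$. The cancellation identity in the previous paragraph is precisely what keeps the constants $t$-independent: although the set $\varphi(-t,[a,b])$ may escape to the boundary of $\Omega$ and the values of $h,F'$ on it are a priori uncontrolled, admissibility absorbs exactly the problematic $t$-dependent line integral, leaving a bound depending only on $[a,b]$, $F$, $h$ and $M$, $\omega$.
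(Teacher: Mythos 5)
The paper does not actually prove this lemma: it is recalled verbatim from \cite{aroza_kalmes_mangino2013chaotic} (Lemma~7) and \cite{kalmes2009hypercyclic}, so there is no in-paper proof to compare against; judged on its own, your reconstruction is essentially the standard argument and is correct. Part~(2) via the cocycle identity, the one-dimensional Liouville formula $\partial_2\varphi(t,y)=\exp\bigl(\int_0^t F'(\varphi(s,y))\,ds\bigr)$ and the substitution $y=\varphi(s,x)$ checks out, and in part~(1) the cancellation trick --- letting admissibility at the uniformly bounded transit time $\tau(x,\beta)$, applied at $\varphi(-t,\beta)$, absorb the uncontrolled integral over $\varphi(-t,[a,b])$ --- is exactly the right mechanism and yields $t$-independent constants. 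Two points you should make explicit: first, the indicator functions are consistent with the claimed inequalities because $x\in\varphi(t,\Omega)$ forces $\beta=\varphi(\tau(x,\beta),x)\in\varphi(t,\Omega)$ (and $\alpha\in\varphi(t,\Omega)$ forces $x\in\varphi(t,\Omega)$), so the ratio argument never divides by zero; second, and more seriously, $p$-admissibility holds only $\lambda$-a.e.\ with an exceptional null set depending on the time parameter, while you evaluate it at single points $\varphi(-t,\beta)$ for a continuum of times $\tau(x,\beta)$ --- you flag this but your proposed fix is only a sketch, and the honest reading of the conclusion is ``for a suitable representative of $\rho$'' or ``for a.e.\ $x$'', which is in fact how the present paper uses the lemma in the proof of Theorem~\ref{characterization stability_2}.
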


\begin{remark}\label{F'zero}
	{\rm Observe that if $\lambda(\Omega_0)>0$ then 
	$$\rho_{t,p}(x)=e^{ pt h(x)} \rho(x),\qquad \lambda-\mbox{a.e. } x\in\Omega_0.$$
	In fact, we have that  $\Omega_0=\bigcup_{n\in \N} (a_n,b_n)$, where $a_n<b_n$; being $F\in C^1$ with $F(x)=0$ on $(a_n, b_n)$,  then $F'=0$ in $(a_n, b_n)$ for every $n\in\N$, thus $F'=0$ in $\Omega_0$.}
\end{remark}

A first consequence of Theorem~\ref{characterization stability_1} is a characterization of stability for the $C_0$-semigroup $\mathcal{T}_{F,h}$ on $L^p_\rho(\Omega)$.

\begin{theorem}\label{characterization stability_2}
Let $\Omega\subseteq\R, F,h$ satisfying (H1)-(H3), and let $\rho$ be $p$-admissible measurable function for $F$ and $h$.
Then the following conditions are equivalent:
\begin{itemize}
	\item[i)] $\mathcal{T}_{F,h}$ is stable on $L^p_\rho (\Omega)$,
	\item[ii)] It holds:
\begin{enumerate}
\item[(1)]  $\mathcal{T}_{F,h}$ is bounded;
\item[(2)]  $\lim_{t\to+\infty} \rho_{t,p}(x)=0$ for $\lambda$-a.e. $x\in \Omega_1$
\item[(3)]  if $\lambda(\Omega_0)>0$, $h(x)<0$ $\lambda$-a.e. in $\Omega_0$.
\end{enumerate}
\end{itemize}

\end{theorem}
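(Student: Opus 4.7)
The plan is to reduce the theorem to Theorem~\ref{characterization stability_1}: condition~(1) matches verbatim, so I only need to show that, in the presence of boundedness, the requirement $\int_Q \rho_{t,p}(x)\,dx\to 0$ for every bounded interval $Q\subseteq\Omega$ is equivalent to (2)+(3). The natural strategy is to split $Q = (Q\cap\Omega_0)\cup(Q\cap\Omega_1)$ and exploit the two different explicit forms of $\rho_{t,p}$ supplied by Remark~\ref{F'zero} (on $\Omega_0$) and Lemma~\ref{crucial lemma} (on $\Omega_1$).

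For the direction (i)$\Rightarrow$(ii) I would argue (3) by contradiction: if $\{x\in\Omega_0 : h(x)\geq 0\}$ had positive Lebesgue measure, it would meet some bounded interval $Q$ in a set of positive measure, and then using $\rho_{t,p}=e^{pth}\rho$ on $\Omega_0$ one obtains $\int_Q\rho_{t,p}\,dx\geq\int_{\{h\geq 0\}\cap\Omega_0\cap Q}\rho\,dx>0$ uniformly in $t$, contradicting Theorem~\ref{characterization stability_1}. For (2) the key observation is that Lemma~\ref{crucial lemma}(1) provides a \emph{lower} bound $\rho_{t,p}(\alpha)/C\leq\rho_{t,p}(z)$ on any subinterval of $\Omega_1$ of constant sign of $F$, where $\alpha$ is the ``upstream'' endpoint. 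Given $x\in\Omega_1$, its connected component in $\Omega_1$ is an open interval on which $F$ has constant sign, so I can pick $y$ in the same component on the side making $x$ play the role of $\alpha$ (take $y>x$ if $F>0$, $y<x$ if $F<0$); integrating the inequality over the interval between $x$ and $y$ then yields $|y-x|\rho_{t,p}(x)/C \leq \int\rho_{t,p}\,dz\to 0$, which gives $\rho_{t,p}(x)\to 0$ at every $x\in\Omega_1$.

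For the converse (ii)$\Rightarrow$(i), I would verify the integral condition of Theorem~\ref{characterization stability_1} by dominated convergence applied separately on each piece of $Q$. On $Q\cap\Omega_0$, Remark~\ref{F'zero} together with (3) gives pointwise convergence $\rho_{t,p}\to 0$ with dominating function $\rho\in L^1(Q)$ (since $h<0$ forces $e^{pth}\leq 1$); on $Q\cap\Omega_1$, (2) gives the pointwise limit while boundedness~(1) forces $\rho_{t,p}\leq M\rho$ a.e. for $M=\sup_t\|T_{F,h}(t)\|$, again integrable on the bounded interval $Q$.

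The main obstacle I anticipate is the pointwise decay claim on $\Omega_1$ in (i)$\Rightarrow$(2): the asymmetry of Lemma~\ref{crucial lemma}(1), with the upstream endpoint $\alpha$ appearing in the lower bound and the downstream endpoint $\beta$ in the upper bound, means that the integral condition only directly yields information about $\rho_{t,p}$ at upstream points. The key combinatorial input is the recognition that every point of $\Omega_1$ lies in the interior of a component of constant sign for $F$, and can therefore be cast as such an upstream endpoint on some small interval.
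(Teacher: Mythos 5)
Your proposal is correct and follows essentially the same route as the paper: reduce to Theorem~\ref{characterization stability_1}, use Lemma~\ref{crucial lemma}(1) with $x$ cast as the upstream endpoint $\alpha$ of a small interval of constant sign to convert the integral decay into pointwise decay on $\Omega_1$, read off condition (3) from $\rho_{t,p}=e^{pth}\rho$ on $\Omega_0$, and apply dominated convergence (with the bound $\rho_{t,p}\leq M\rho$ coming from boundedness) for the converse. The only cosmetic difference is that the paper obtains (3) by first splitting $L^p_\rho(\Omega)=L^p_\rho(\Omega_0)\oplus L^p_\rho(\Omega_1)$ into invariant subspaces and testing stability of the restriction to $\Omega_0$, whereas you extract it directly from the integral condition of Theorem~\ref{characterization stability_1}; both arguments are valid.
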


\begin{proof} 
 If we define 
 \[ X_i=\{ f\in L^p_\rho(\Omega)\,\mid\, f=0\ \mbox{a.e. in}\  \Omega_i\},\qquad i=0,1,\]
 clearly we can identify $X_i$ with $L^p_\rho(\Omega_i)$ and 
 \[ L^p_\rho(\Omega)=X_0 \oplus X_1= L^p_\rho(\Omega_0)\oplus L^p_\rho(\Omega_1).\]
 If $\lambda(\Omega_0)=0$, then $X_0$ reduces to $\{0\}$ and $L^p_\rho(\Omega)$ can be identified with $L^p_\rho(\Omega_1)$.
  
 For every $x_0\in\Omega_0$ we have  that $\varphi(t,x_0)=x_0$ for every $t\geq 0$. By the  uniqueness of  the solutions of the initial value problems
\[\dot{x}=F(x),\; x(0)=x_0\, (x_0\in\Omega),\]
$\varphi(t, \Omega_0)\subseteq \Omega_0$ and $\varphi(t, \Omega_1)\subseteq \Omega_1$.
This implies that $L^p_\rho(\Omega_i)$ is invariant  under $\mathcal{T}_{F,h}$ for $i=0,1$. 
Thus we can define $\mathcal{T}^i_{F,h}= {\mathcal{T}_{F,h}}_{|L_\rho^p(\Omega_i)}$ and we can write 
\[ \mathcal{T}_{F,h} = \mathcal{T}^0_{F,h}\oplus \mathcal{T}^1_{F,h}.\]
Clearly $\mathcal{T}_{F,h}$ is stable on $L^p_\rho(\Omega)$ if and only if $\mathcal{T}^i_{F,h}$, $i=0,1$, are stable on $L^p_\rho(\Omega_i)$.

Observe moreover that $T^0_{F,h}(t)(f)(x)=\exp(th(x))f(x)$ for every $f\in L^p_\rho(\Omega_0)$ and $x\in\Omega_0$.

$``\Rightarrow''$: Let $x\in\Omega_1$ such that $F(x)\not=0$. If $F(x)>0$ there exists $r>0$ such that $[x, x+r]\subseteq \Omega_1$ with $F(s)>0$ for $s\in [x, x+r]$.  By Lemma \ref{crucial lemma}, there exists $C>0$ such that 
\[\rho_{t, p}(x)\leq C\rho_{t,p}(s) \qquad \mbox{a.e. } s\in [x, x+r],\]
hence
\[\rho_{t, p}(x)= \frac 1 r \int_x^{x+r} \rho_{t,p}(x)ds \leq \frac{C}{r}\int _x^{x+r} \rho_{t,p}(s)ds ,\]
and therefore, by assumption and Theorem~\ref{characterization stability_1}, $\lim_{t\to\infty} \rho_{t, p}(x)=0$.
If $F(x)<0$,  we consider an interval $[x-r, x]$ with $F(s)<0$ for $s\in [x-r, x]$  and we get the assertion again by Lemma \ref{crucial lemma} arguing as in the case $F(x)>0$.

By the stability of $\mathcal{T}^0_{F,h}$,  we get that
\[ \lim_{t\to\infty}\int_{\Omega_0} e^{pth(x)}|f(x)|^p\rho(x)dx=0\]
hence either $\lambda(\Omega_0)=0$  or if $\lambda(\Omega_0)>0$ we have $h(x)<0$ $\lambda$ -a.e. in $\Omega_0$.

%By the previous argument, in the case that $\lambda(\Omega_0)=0$ the condition (3) is not necessary and the only restrictions over $h$ are given by condition (1), in particular for $x
%\in \Omega_0$ it is equivalent to $h(x)\leq \frac{1}{p} F'(x)$.

$''\Leftarrow''$: By (1) ,  there exists $M>0$ such that $\rho_{t,p}(s)\leq M\rho(s)$ a.e. in $\Omega$. Then, for every $f\in L^p_\rho(\Omega_1)$, being $\rho$ locally integrable on $\Omega$, we can apply the dominated convergence theorem to get that for any bounded interval $Q\subseteq \Omega$:
\[  \lim_{t\to\infty} \int_{Q} \rho_{t,p}(s)ds =  0. \]
\end{proof}

\begin{example}[Left translation semigroup]
	{\rm Let $\Omega=\R, F=1,$ and $h=0$.  It is easily seen that $\rho$ is $p$-admissible for $F$ and $h$ for some $p\in[1,\infty)$ if the same holds for every $p\in[1,\infty)$ and   the $C_0$-semigroup $\mathcal{T}_{F,h}$ on $L^p_\rho(\R)$ is defined by $(T_{F,h}(t)f)(x)=f(x+t)$. Moreover, we have $\Omega_0=\emptyset$ and $\rho_{t,p}(x)=\rho(x-t)$.
	
	By Theorem~\ref{characterization stability_2}, $\mathcal{T}_{F,h}$ is stable on $L^p_\rho(\R)$ if and only if for every $x\in \R$ there exists a real finite constant $C\geq 0$ such that
	\[\rho(x-t)\leq C\rho(x),\ \forall t \geq 0, x\in\R \mbox{ and }\lim_{x\to -\infty}\rho(x)=0.\]
	This condition is independent of $p$.}
	
\end{example}

\begin{example}
	{\rm Let $\Omega=\R$, $F(x):=1-x,\ h(x)=0$ so that $h_t(x)=1$, $\varphi(t,\R)=\R$ and $\partial_2\varphi(t,x)=e^{-t}$, where $\varphi(t,x)=1+(x-1)e^{-t}$. 
	In this case $\mathcal{T}_{F,h}$ is given by $(T_{F,h}(t)f)(x)=f(1+(x-1)e^{-t})$.
	Furthermore, $\Omega_0=\{1\}$, thus  $\lambda(\Omega_0)=0$, and
	\[\forall\,t\geq 0: \;\rho_{t,p}(x)=\rho(1+(x-1)e^t)e^t,\quad x\in \Omega_1=\R\setminus\{-1\}.\]
	By Theorem~\ref{characterization stability_2}  $\mathcal{T}_{F,h}$ is stable on $L^p_\rho(\R)$ if and only if  there exists a real finite constant $C\geq 0$ such that
	\[\rho(1+(x-1)e^t)e^t\leq C\rho(x),\ \forall t\geq 0,\ \forall x\in \R\setminus\{-1\}\mbox{ and }\lim_{|r|\to \infty}\rho(r)r=0.\]}
\end{example}

We can simplify the assumptions of Theorem~\ref{characterization stability_2}, if  we have more information about the flow:

\begin{corollary} Let $\Omega\subseteq\R,F,h$ satisfying (H1)-(H3) and let $\rho$ be $p$-admissible measurable function for $F$ and $h$. Assume that 
\begin{equation*} \forall x\in \Omega_1,\ \exists \ \overline t>0 \ : \qquad x\notin \varphi(\overline t, \Omega). \end{equation*}
 where $\varphi$ is the semiflow associated with $F$. Then the following conditions are equivalent:
\begin{itemize}
	\item[(i)] $T_{F,h}$ is stable on $L^p_\rho (\Omega)$,
	\item[(ii)] It holds:
\begin{enumerate}
\item[(1)] $T_{F,h}$ is bounded, 
\item[(2)] if $\lambda(\Omega_0)>0$, $h(x)<0$ $\lambda$-a.e. in $\Omega_0$.
\end{enumerate}
\end{itemize}
\end{corollary}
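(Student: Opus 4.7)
The plan is to reduce the corollary to Theorem~\ref{characterization stability_2} by showing that, under the extra hypothesis, condition (2) of that theorem (the pointwise decay of $\rho_{t,p}$ on $\Omega_1$) is automatic and hence absorbed into the assumptions that remain.

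The implication (i)$\Rightarrow$(ii) is immediate: if $\mathcal{T}_{F,h}$ is stable on $L^p_\rho(\Omega)$, then Theorem~\ref{characterization stability_2} already delivers boundedness of the semigroup as well as the condition on $h$ on $\Omega_0$ when $\lambda(\Omega_0)>0$. So (1) and (2) of the corollary's (ii) hold.

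For the converse, I would verify all three conditions (1), (2), (3) of Theorem~\ref{characterization stability_2}(ii). Conditions (1) and (3) there are precisely (ii)(1) and (ii)(2) of the corollary. The heart of the argument is to derive condition (2) of Theorem~\ref{characterization stability_2}, namely $\lim_{t\to\infty}\rho_{t,p}(x)=0$ for a.e.\ $x\in\Omega_1$, from the orbit-avoidance hypothesis. The key observation is that the family $\{\varphi(t,\Omega)\}_{t\geq 0}$ is nested and decreasing in $t$: indeed, by the semiflow property, for $t\geq \overline t$ one has
\[
\varphi(t,\Omega)=\varphi(\overline t,\varphi(t-\overline t,\Omega))\subseteq \varphi(\overline t,\Omega),
\]
since $\varphi(t-\overline t,\Omega)\subseteq \Omega$. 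Consequently, if $x\in\Omega_1$ and $\overline t>0$ is such that $x\notin \varphi(\overline t,\Omega)$, then $x\notin \varphi(t,\Omega)$ for every $t\geq \overline t$, so $\chi_{\varphi(t,\Omega)}(x)=0$ and therefore $\rho_{t,p}(x)=0$ for all $t\geq \overline t$. In particular the pointwise limit is zero for every (not just a.e.) $x\in\Omega_1$.

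Having established (1), (2), (3) of Theorem~\ref{characterization stability_2}(ii), stability of $\mathcal{T}_{F,h}$ on $L^p_\rho(\Omega)$ follows directly. The only point that requires care is confirming the monotonicity of $t\mapsto \varphi(t,\Omega)$ from the semiflow identity $\varphi(t,\cdot)\circ\varphi(s,\cdot)=\varphi(t+s,\cdot)$ and the inclusion $\varphi(s,\Omega)\subseteq\Omega$; once this is in place, the corollary follows with no further work.
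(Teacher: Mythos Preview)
Your proposal is correct and follows essentially the same route as the paper's proof: both observe that the semiflow property forces $\varphi(t,\Omega)\subseteq\varphi(\overline t,\Omega)$ for $t\geq\overline t$, so that $x\notin\varphi(\overline t,\Omega)$ implies $\rho_{t,p}(x)=0$ for all $t\geq\overline t$, and then condition (2) of Theorem~\ref{characterization stability_2} is automatically satisfied on $\Omega_1$. The paper's argument is a one-line sketch of exactly this; your version simply spells out the monotonicity of $t\mapsto\varphi(t,\Omega)$ more explicitly.
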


\begin{proof}  Simply observe that the assumption implies that  $x\notin \varphi(t,\Omega)$ for every $t>\overline t$ and for every $x\in\Omega_1$, and therefore $\lim_{t\to\infty}\rho_{t,p}(x)=0$.
\end{proof}

We will apply this corollary to the von Foerster- Lasota semigroup in Section 4.

If  $\rho=1$, a straightforward consequence of Lemma \ref{crucial lemma}(2) gives the following characterization.

\begin{theorem}\label{stability_rho1}
	Let $\Omega\subseteq\R, F,h$ satisfying (H1)-(H3) and assume that $\rho=1$ is a $p$-admissible function for $F$ and $h$. Assume that $F(x)\not=0$ for every $x\in \Omega$.
\begin{enumerate}
\item[(1)] If $\varphi(t, \Omega)=\Omega$ for every $t>0$, 
then t.f.a.e.:
\begin{itemize}
	\item[(i)] $\mathcal{T}_{F,h}$ is stable on $L^p(\Omega)$;
	\item[(ii)] It holds, 
					\begin{itemize}
						\item[(a)] there exists $C\in\R$ such that 
							\begin{equation*}\label{FF'} \int_y^{\varphi(t,y)}\frac{h(s)-\frac 1 p F'(s)}{F(s)}ds\leq C \ \ \ a.e.\ y\in \Omega, \ t\geq 0,\end{equation*}
							or, equivalently, $\mathcal{T}_{F,h}$ is bounded;
						\item[(b)] for every $y\in\Omega$
							\[ \lim_{t\to +\infty} \int_y^{\varphi(t,y)}\frac{h(s)-\frac 1 p F'(s)}{F(s)}ds =-\infty.\]
					\end{itemize}
\end{itemize}
\item[(2)] If  
\begin{equation*} \forall x\in \Omega_1\ \exists \ \overline t>0 \qquad x\notin \varphi(\overline t, \Omega). \end{equation*}
then t.f.a.e.:
\begin{itemize} \item[(i)] $\mathcal{T}_{F,h}$ is stable on $L^p(\Omega)$;
\item[(ii)]  there exists $C\in\R$ such that 
\begin{equation*}\int_y^{\varphi(t,y)}\frac{h(s)-\frac 1 p F'(s)}{F(s)}ds\leq C \ \ \ a.e.\ y\in \Omega, \ t\geq 0,\end{equation*}
or, equivalently, $\mathcal{T}_{F,h}$ is bounded.
\end{itemize}
\end{enumerate}
\end{theorem}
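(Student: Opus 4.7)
The plan is to deduce both parts by combining the general characterizations already established with the explicit expression for $\rho_{t,p}$ in Lemma~\ref{crucial lemma}(2). Since $F$ vanishes nowhere, $\Omega_0=\emptyset$ and $\Omega_1=\Omega$, so every clause involving $\Omega_0$ in the earlier statements is vacuous. With $\rho=1$, Lemma~\ref{crucial lemma}(2) reads
\[
\rho_{t,p}(x)=\chi_{\varphi(t,\Omega)}(x)\exp\!\left(p\int_{\varphi(-t,x)}^x\frac{h(y)-F'(y)/p}{F(y)}\,dy\right),
\]
and the change of variable $x=\varphi(t,y)$, which identifies $\varphi(t,\Omega)$ with $\{x:\varphi(-t,x)\in\Omega\}$, rewrites the inner integral as $\int_y^{\varphi(t,y)}\tfrac{h-F'/p}{F}\,ds$. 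This substitution is the workhorse of the proof.

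For part~(2), the preceding corollary already identifies stability with boundedness (using $\Omega_0=\emptyset$). By \eqref{normT}, boundedness of $\mathcal{T}_{F,h}$ is precisely $\sup_{t\geq 0}\|\rho_{t,p}\|_\infty<\infty$; applying the formula above and taking logarithms turns this into the claim that there exists $C\in\R$ with $\int_y^{\varphi(t,y)}\tfrac{h-F'/p}{F}\,ds\leq C$ for a.e.\ $y\in\Omega$ and every $t\geq 0$.

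For part~(1), the hypothesis $\varphi(t,\Omega)=\Omega$ removes the indicator $\chi_{\varphi(t,\Omega)}$ throughout, and Theorem~\ref{characterization stability_2} expresses stability as (1)~boundedness and (2)~the pointwise decay $\rho_{t,p}(x)\to 0$ for $\lambda$-a.e.\ $x\in\Omega$. Condition~(a) captures boundedness exactly as in part~(2), and condition~(b), after the substitution, is the translation of condition (2), namely $\int_y^{\varphi(t,y)}\tfrac{h-F'/p}{F}\,ds\to -\infty$ as $t\to+\infty$.

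The main obstacle will be promoting the ``$\lambda$-a.e.\ $x$'' of condition (2) in Theorem~\ref{characterization stability_2} to the ``for every $y\in\Omega$'' of (b). I expect to exploit the cocycle identity $\phi(y,t_1+t_2)=\phi(y,t_1)+\phi(\varphi(t_1,y),t_2)$, where $\phi(y,t)$ denotes the integral in question, together with the uniform upper bound from (a): together they force the set $E:=\{y\in\Omega\colon\phi(y,t)\to -\infty\}$ to be invariant under the group action of the flow on each connected component of $\Omega$. Since $\varphi(t,\Omega)=\Omega$, the flow acts transitively on every component, so a flow-invariant subset that is nonempty on some component equals the entire component, and the full-measure statement obtained from Theorem~\ref{characterization stability_2} upgrades to an everywhere statement, yielding (b).
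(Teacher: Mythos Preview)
Your treatment of part~(2) and of condition~(a) in part~(1) is correct, and it is precisely what the paper's one-line justification (``a straightforward consequence of Lemma~\ref{crucial lemma}(2)'') has in mind: for each fixed $t$ the map $y\mapsto\varphi(t,y)$ is a bijection of $\Omega$ onto $\varphi(t,\Omega)$, so the essential supremum of $\rho_{t,p}$ over $x$ coincides with the supremum of $\exp\bigl(p\int_y^{\varphi(t,y)}(h-F'/p)/F\bigr)$ over $y$, and boundedness translates exactly as stated.

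The gap is in your handling of condition~(b). The substitution $x=\varphi(t,y)$ is \emph{$t$-dependent}, so it does not convert the statement ``$\rho_{t,p}(x)\to 0$ as $t\to\infty$ with $x$ fixed'' into ``$\int_y^{\varphi(t,y)}(h-F'/p)/F\to-\infty$ with $y$ fixed''. Reading Lemma~\ref{crucial lemma}(2) with $\rho=1$ and $\varphi(t,\Omega)=\Omega$ directly, condition~(2) of Theorem~\ref{characterization stability_2} says that
\[
\int_{\varphi(-t,x)}^{x}\frac{h(s)-\tfrac1p F'(s)}{F(s)}\,ds\longrightarrow-\infty
\]
for a.e.\ $x$, an integral along the \emph{backward} orbit of $x$; condition~(b) is the analogous statement along the \emph{forward} orbit of $y$. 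These are not interchangeable. For instance, take $\Omega=\R$, $F\equiv 1$, and $h\in C(\R)$ with $h\leq 0$, $h\equiv 0$ on $(-\infty,0]$ and $h(s)\to-1$ as $s\to+\infty$. Then $\rho=1$ is $p$-admissible, (a) holds (all the integrals are $\leq 0$), and (b) holds since $\int_y^{y+t}h\sim -t$ for every $y$; yet for any $f$ supported in $(-\infty,0]$ one computes $\|T(t)f\|_p=\|f\|_p$, so $\mathcal T_{F,h}$ is not stable. Conversely, with $h$ reflected ($h\equiv 0$ on $[0,\infty)$, $h\to-1$ at $-\infty$) the semigroup \emph{is} stable but (b) fails for every $y\geq 0$. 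Thus the obstacle you flag is misdiagnosed: your cocycle/flow-invariance argument correctly shows that $\{y:\int_y^{\varphi(t,y)}(h-F'/p)/F\to-\infty\}$ is a union of connected components, but this addresses the a.e.-versus-everywhere issue, not the forward-versus-backward one, and the latter cannot be closed by that device. The straightforward transcription of Theorem~\ref{characterization stability_2} via Lemma~\ref{crucial lemma}(2) yields the backward integral $\int_{\varphi(-t,x)}^{x}$ in place of $\int_y^{\varphi(t,y)}$ in~(b), and that is the condition your argument actually proves.
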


\section{Stability on Sobolev spaces}\label{sobolev_one_dimensional_2}

Throughout  this section, let $I=(a,b)$ be a bounded open interval of $\R$. For $1\leq p\leq \infty$ we set as usual 
\[W^{1,p}(I)=\{u\in L^p(I); u'\in L^p(I)\},\]
where $u'$ denotes the distributional derivative of $u$. Endowed with the norm
\[\norm{u}_{1,p}=\norm{u}_p+\norm{u'}_p,\]
$W^{1,p}(I)$ is a Banach space. It holds that $W^{1,p}(I)\subseteq C [a,b]$ and that for any $x\in [a,b]$ the point evaluation $\delta_x$ in $x$ is a continuous linear form on $W^{1,p}(I)$. We are interested also in the following closed subspace of $W^{1,p}(I)$,
\[W^{1,p}_*(I):=\mbox{ker}\,\delta_a.\]
From the boundedness of $I$ we have the topological direct sum
\[W^{1,p}(I)=W^{1,p}_*(I)\oplus \mbox{span}\,\{\1\},\]
where $\1$ denotes the constant function with value $1$.

Let $F:[a,b]\rightarrow\R$ a $C^1$-function satisfying (H2)-(H3) with $F(a)=0$, $h\in C([a,b])$ and consider   the restriction $\mathcal S_{F,h}$ of $\mathcal T_{F,h}$ to $W^{1,p}(I)$, for every $1\leq p<\infty$. Then $\mathcal S_{F,h}$ is a $C_0$-semigroup on $W^{1,p}(I)$ and $W_* ^{1,p}(I)$ is invariant for $\mathcal S_{F,h}$ (see \cite[Proposistion 23]{aroza_kalmes_mangino2013chaotic}). 

We recall  that two $C_0$-semigroups $\mathcal{T}$ and $\mathcal{S}$ , on Banach spaces $Y$ and $X$ respectively, are said to be conjugate if there exists a linear homeomorphism  $\phi :Y\to X$ with dense range such that $\phi \circ T(t)=S(t)\circ \phi$, for every $t\geq 0$. It is immediate  that stability is invariant under conjugacy.

\begin{theorem}\label{stability_sobolevspaces}
	Let $I=(a,b)$ be a bounded interval, $1\leq p<\infty$, $F$ satisfying (H2), (H3) with $F(a)=0$ and $F(x)\not=0$ in $]a,b[$ and $h\in W^{1,\infty}(I)$.  Assume that
the function $[a,b]\rightarrow\R,y\mapsto\frac{h(y)-h(a)}{F(y)}$ belongs to $L^\infty(I)$. 	The following  conditions hold:
\begin{enumerate}
	\item[(1)] If $\varphi(t, I)=I$ for every $t>0$, 
		then t.f.a.e.:
		\begin{itemize}
			\item[(i)] $\mathcal{S}_{F,h}$ is stable on $W^{1,p}_*(I)$;
			\item[(ii)] It holds, 
					\begin{itemize}
						\item[(a)] there exists $C\in\R$ such that 
							\begin{equation*} \int_y^{\varphi(t,y)}\frac{h(a)-\left(\frac 1 p -1\right) F'(s)}{F(s)}ds\leq C \ \ \ a.e.\ y\in I, \ t\geq 0;\end{equation*}
						\item[(b)] for every $y\in I$
							\[ \lim_{t\to +\infty} \int_y^{\varphi(t,y)}\frac{h(a)-\left(\frac 1 p -1\right) F'(s)}{F(s)}ds =-\infty.\]
					\end{itemize}
		\end{itemize}
	\item[(2)] If  
		\begin{equation*} \forall x\in I,\ \exists \ \overline t>0\ : \quad x\notin \varphi(\overline t, I) \end{equation*}
		then t.f.a.e.:
		\begin{itemize} 
			\item[(i)] $\mathcal{S}_{F,h}$ is stable on $W^{1,p}_*(I)$;
			\item[(ii)]  there exists $C\in\R$ such that 
				\begin{equation*} \int_y^{\varphi(t,y)}\frac{h(a)-\left(\frac 1 p -1\right) F'(s)}{F(s)}ds\leq C \ \ \ a.e.\ y\in I, \ t\geq 0.\end{equation*}
		\end{itemize}
\end{enumerate}
Moreover, $\mathcal{S}_{F,h}$ is stable on $W^{1,p}(I)$ if and only if it is stable on $W^{1,p}_*(I)$ and $h(a)<0$.
\end{theorem}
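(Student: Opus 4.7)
My plan is to reduce parts (1) and (2) to Theorem~\ref{stability_rho1} via an explicit conjugacy between $\mathcal S_{F,h}|_{W^{1,p}_*(I)}$ and a weighted composition semigroup on $L^p(I)$, and to handle the ``moreover'' assertion by a direct sum argument combined with a cocycle identity.

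To construct the conjugacy I introduce $\Psi\colon W^{1,p}_*(I)\to L^p(I)$ by
\[\Psi(u)=u'+\frac{h-h(a)}{F}\,u.\]
The hypothesis $(h-h(a))/F\in L^\infty(I)$ makes $\Psi$ bounded; it is injective because the linear ODE $u'+\frac{h-h(a)}{F}u=0$ with $u(a)=0$ admits only the trivial solution, and the variation-of-constants formula $u(x)=e^{-Q(x)}\int_a^x e^{Q(s)}v(s)\,ds$ with $Q(x)=\int_a^x\frac{h-h(a)}{F}$ yields surjectivity (and the $L^p$-bounds, since $|I|<\infty$), so $\Psi$ is a Banach-space isomorphism. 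Differentiating the definition of $h_t$ and performing the change of variable $y=\varphi(s,x)$ gives $\partial_x h_t(x)=\frac{h_t(x)}{F(x)}[h(\varphi(t,x))-h(x)]$; combined with $\partial_x\varphi(t,x)=F(\varphi(t,x))/F(x)$ and the telescoping $[h(\varphi(t,\cdot))-h(\cdot)]+[h(\cdot)-h(a)]=h(\varphi(t,\cdot))-h(a)$, a direct calculation delivers
\[\Psi(S_{F,h}(t)u)(x)=h_t(x)\varphi_x(t,x)\Psi(u)(\varphi(t,x))=\exp\!\left(\int_0^t(h+F')(\varphi(s,x))\,ds\right)\Psi(u)(\varphi(t,x)).\]
Hence $\Psi$ conjugates $\mathcal S_{F,h}|_{W^{1,p}_*(I)}$ to $\mathcal T_{F,h+F'}$ on $L^p(I)$ with $\rho=1$, admissibility being automatic from the conjugacy.

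Applying Theorem~\ref{stability_rho1} to $\mathcal T_{F,h+F'}$ translates (1) and (2) into conditions on
\[\frac{(h+F')(s)-\tfrac1pF'(s)}{F(s)}=\frac{h(s)-(\tfrac1p-1)F'(s)}{F(s)}.\]
Since $(h-h(a))/F\in L^\infty(I)$ and $I$ is bounded, $\int_y^{\varphi(t,y)}\frac{h(s)-h(a)}{F(s)}\,ds$ is uniformly bounded in $(y,t)$, so replacing $h(s)$ by $h(a)$ in the integrand alters each condition only by a bounded additive term, preserving both the uniform upper bound in (1)(ii)(a)/(2)(ii) and the $-\infty$ limit in (1)(ii)(b). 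This yields (1) and (2).

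For the ``moreover'' statement, $W^{1,p}_*(I)$ is $\mathcal S_{F,h}$-invariant with $S_{F,h}(t)\1=h_t$. Necessity of $h(a)<0$ follows from applying stability on $W^{1,p}(I)$ to $\1$ and using the embedding $W^{1,p}(I)\hookrightarrow C([a,b])$ to force $h_t(a)=e^{th(a)}\to 0$. For sufficiency, decompose $h_t=h_t(a)\1+g_t$ with $g_t:=h_t-h_t(a)\1\in W^{1,p}_*(I)$; since $|h_t(a)|\to 0$, it suffices to show $\|g_t\|_{1,p}\to 0$. The cocycle $h_{t+s}(x)=h_t(x)h_s(\varphi(t,x))$ together with $\varphi(t,a)=a$ yields the key identity
\[g_{t+s}=S_{F,h}(t)g_s+h_s(a)\,g_t,\]
living entirely in $W^{1,p}_*(I)$. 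Choosing $s$ so that $|h_s(a)|<1/2$, using $g_0=0$, and iterating gives $\|g_{ns}\|_{1,p}\le\sum_{k=0}^{n-1}(1/2)^{n-1-k}\|S_{F,h}(ks)g_s\|_{1,p}\to 0$ by a Cesàro-type argument (the right factor tends to $0$ by stability on $W^{1,p}_*(I)$); the same identity applied with $t=ns+\tau$, $\tau\in[0,s]$, extends this to $t\to\infty$ via compactness of $\{g_\tau:\tau\in[0,s]\}$ in $W^{1,p}_*(I)$, on which $\|S_{F,h}(t)(\cdot)\|_{1,p}\to 0$ uniformly. I anticipate this sufficiency step to be the main obstacle: since $\1\notin W^{1,p}_*(I)$, stability on the invariant subspace does not a priori control $\|h_t\|_{1,p}$, and the cocycle identity is the essential device that converts the scalar contraction $|h_s(a)|<1/2$ into decay of $g_t$.
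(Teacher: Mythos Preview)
Your argument is correct. For parts (1) and (2) you follow essentially the paper's route: conjugate the Sobolev semigroup to a weighted composition semigroup on $L^p(I)$ and invoke Theorem~\ref{stability_rho1}. The only difference is cosmetic: the paper quotes from \cite{aroza_kalmes_mangino2013chaotic} a conjugacy of $\mathcal S_{F,h}|_{W^{1,p}_*(I)}$ directly with $\mathcal T_{F,F'+h(a)}$, which already has the constant $h(a)$ in the cocycle and therefore lands on the stated integrand without your additional ``replace $h(s)$ by $h(a)$'' step. Your explicit intertwiner $\Psi(u)=u'+\tfrac{h-h(a)}{F}u$ conjugates instead to $\mathcal T_{F,h+F'}$, and the two $L^p$ semigroups differ by multiplication by the bounded function $e^{Q}$, so the two reductions are equivalent.

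For the ``moreover'' clause you take a genuinely different path. The paper uses a second conjugacy (again cited from \cite{aroza_kalmes_mangino2013chaotic}) between $\mathcal S_{F,h}$ and $\mathcal S_{F,h(a)}$ on the whole of $W^{1,p}(I)$; for the latter semigroup $\mathrm{span}\{\1\}$ is invariant with $S_{F,h(a)}(t)\1=e^{th(a)}\1$, so the direct sum $W^{1,p}(I)=W^{1,p}_*(I)\oplus\mathrm{span}\{\1\}$ reduces the question to the one-line computation $e^{th(a)}\to 0\iff h(a)<0$. Your approach works directly with $\mathcal S_{F,h}$, where $\mathrm{span}\{\1\}$ is \emph{not} invariant, and compensates via the cocycle identity $g_{t+s}=S_{F,h}(t)g_s+h_s(a)g_t$ together with a geometric--Ces\`aro estimate and uniform stability on compacta. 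This is longer but entirely self-contained, avoiding the second black-box conjugacy; the paper's argument is shorter but leans on the cited reference.
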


\begin{proof}
	By  the discussion in \cite[Section 3]{aroza_kalmes_mangino2013chaotic}, 	the $C_0$-semigroup $\mathcal{S}_{F,h}$ on $W^{1,p}_*(I)$ is conjugate to $\mathcal{T}_{F,F'+h(a)}$ on $L^p(I)$. 	Thus, $\mathcal{S}_{F,h}$ is stable on $W^{1,p}_*(I)$ if and only if $\mathcal{T}_{F,F'+h(a)}$ is stable on $L^p(I)$.
	
	A short calculation shows that $\mathcal{T}_{F,F'+h(a)}$ is stable on $L^p(I)$ if and only if  condition (ii) holds. In fact, for $\mathcal{T}_{F,F'+h(a)}$ observe that $h_t(x)=e^{th(a)}\partial_2 \varphi(t,x)$ and $\rho(x)=1$ is p-admissible for $F$ and $F'+h(a)$ taking, for example, $M=1$ and $w=ph(a)+(p-1)\|F'\|_{\infty}$. Depending on the behavior of $\varphi(t,\Omega)$ for $t\geq 0$, the proof on $W^{1,p}_*(I)$ is over if we replace $h$ by $F'+h(a)$ in the conditions of Theorem~\ref{stability_rho1}.

Finally, observe that 
\[\mathcal{S}_{F,h(a)}=\mathcal{S}_{F,h(a)_{|W^{1,p}_*(I)}} \oplus \mathcal{S}_{F,h(a)_{|\mbox{span}\,\{\1\}}}\]

Thus $\mathcal{S}_{F,h(a)}$ is stable if and only if  $\mathcal{S}_{F,h(a)}$ is stable on $W^{1,p} _*(I)$ and on $\mbox{span}\,\{\1\}$, i.e., $h(a)<0$. In fact, for $\lambda \neq 0$ we have
\[\mathcal{S}_{F,h(a)}(\lambda \1)=e^{h(a)t}\lambda \stackrel{t\to \infty}{\rightarrow}0 \mbox{ if and only if } h(a)<0.\]
We get the assertion since, by the discussion in \cite[Section 3]{aroza_kalmes_mangino2013chaotic},  the semigroups $\mathcal{S}_{F,h(a)}$ and $\mathcal{S}_{F,h}$  are conjugate.

\end{proof}

\section{The von Foerster - Lasota equation}%\label{Examples_2}

\subsection{The linear von Foerster-Lasota equation}

Let $I=(0,1)$ and $h\in C([0,1])$.	 Consider the linear von Foerster-Lasota equation 
\begin{equation}\label{eq:lasota}\frac{\partial}{\partial\,t}u(t,x)+x\frac{\partial}{\partial\,x}u(t,x)=h(x)\,u(t,x),\;\;t\geq 0,\, 0<x< 1\end{equation}
with the initial condition
\[u(0,x)=v(x),\;\;0 <x< 1,\]
where $v$ is a given function.
This equation  is a particular case of the equation
\begin{equation*}\frac{\partial u}{\partial t}(t,x) + c(x) \frac{\partial u}{\partial x}(t,x)=f(x,u(t,x)) \qquad t\geq0,\ x\in [0,1] \end{equation*}
that was introduced in  \cite{lasota_mackey_wazewska1981_minimizing} to describe the reproduction of a population of red blood cells, mainly in connection with studies about anemia. 
Defining
\begin{equation} T_h(t)v(x)=\exp{\left(\int_{-t} ^0 h(xe^s)ds\right)} v(xe^{-t}),\qquad t\geq0,\ x\in [0,1], \label{sem:lasota}\end{equation}
the family $\mathcal{T}_h=(T_h(t))_{t\geq 0}$ is a $C_0$-semigroup in $L^p([0,1])$ with $1\leq p <\infty$, and $u(t,x)=T_h(t)v(x)$, $t\geq 0, x\in [0,1]$ is the  solution of the equation \eqref{eq:lasota} with initial value $v\in L^p([0,1])$.

Clearly $\mathcal T_h=\mathcal T_{F,h}$ with $F(x)=-x$.
 In  analogy with the  previous section, we denote by $\mathcal{S}_h$ the restrictions of $\mathcal T_h$ to $W^{1,p}(0,1)$.

After the paper \cite{lasota1981stable}, the asymptotic behaviour of $\mathcal T_h$ has  been studied  in different function spaces by several authors  (see e.g.  \cite{rudnicki2012chaoticity, brzezniak_dawidowicz2009on, brzezniak_dawidowicz2013on} and the references quoted therein). We recover their results about stability by applying the discussion of Section 2 and 3.

\begin{theorem}
$\ $
\begin{itemize}
  \item[a)] Assume that for $h\in C[0,1]$  the function 
  \[[0,1]\rightarrow\R,\qquad x\mapsto \frac{h(x)-h(0)}{x}\]
  belongs to $L^1(0,1)$. Then the following properties of the associated von Foerster-Lasota semigroup $\mathcal{T}_h$ on $L^p(0,1)$ are equivalent.
  \begin{itemize}
    \item[i)] $\mathcal{T}_h$ is stable on $L^p(0,1)$.
    \item[ii)] $h(0)\leq -\frac{1}{p}$.
  \end{itemize}
  \item[b)] Assume that for $h\in W^{1,\infty}(0,1)$  the function 
  \[[0,1]\rightarrow\R,\qquad x\mapsto \frac{h(x)-h(0)}{x}\]
  belongs to $L^\infty(0,1)$. Then, for the von Foerster-Lasota semigroup $\mathcal{S}_h$,
  t.f.a.e.:
		\begin{itemize} 
			\item[(i)] $\mathcal{S}_{F,h}$ is stable on $W^{1,p}_*(I)$;
			\item[(ii)] $h(0)\leq1-\frac{1}{p}$.
		\end{itemize}

Moreover, $\mathcal{S}_{h}$ is stable on $W^{1,p}(I)$ if and only if  $h(0)<0$.
\end{itemize}  

\end{theorem}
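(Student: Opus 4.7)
The plan is to apply Theorems~\ref{stability_rho1} and \ref{stability_sobolevspaces} directly, once the geometric data of the von Foerster--Lasota flow on $I=(0,1)$ is matched to their hypotheses. With $F(x)=-x$ the semiflow is $\varphi(t,x)=xe^{-t}$, so $\varphi(t,I)=(0,e^{-t})$, and $F$ has no zeros on $I$; in particular $\Omega_0=\emptyset$ and for every $x\in I$ one has $x\notin\varphi(\bar t,I)$ as soon as $\bar t>-\log x$. This places us in case (2) of both theorems, while admissibility of $\rho\equiv 1$ is a routine verification using $|h_t(x)|\le e^{t\|h\|_\infty}$ and $|\det D\varphi(t,x)|=e^{-t}$.

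For part (a), substituting $F(s)=-s$, $F'(s)=-1$ and using the elementary identity $\int_{ye^{-t}}^{y}\frac{ds}{s}=t$, the integral appearing in Theorem~\ref{stability_rho1}(2)(ii) becomes
\[
\int_y^{\varphi(t,y)}\frac{h(s)-\tfrac1p F'(s)}{F(s)}\,ds \;=\; \int_{ye^{-t}}^{y}\frac{h(s)-h(0)}{s}\,ds \;+\; \Bigl(h(0)+\tfrac{1}{p}\Bigr)\,t.
\]
By hypothesis $(h(s)-h(0))/s\in L^1(0,1)$, so the first summand is uniformly bounded in $(t,y)$, while the second is bounded above on $I\times[0,\infty)$ if and only if $h(0)+\tfrac{1}{p}\le 0$. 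Theorem~\ref{stability_rho1}(2) then identifies stability with $h(0)\le -1/p$.

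For part (b) on $W^{1,p}_*(I)$, the analogous substitution in Theorem~\ref{stability_sobolevspaces}(2)(ii) is even cleaner because the numerator is now constant in $s$:
\[
\int_y^{\varphi(t,y)}\frac{h(0)-\bigl(\tfrac1p-1\bigr)F'(s)}{F(s)}\,ds \;=\; \Bigl(h(0)+\tfrac{1}{p}-1\Bigr)\,t,
\]
whose boundedness from above is equivalent to $h(0)\le 1-1/p$. The statement on the full space $W^{1,p}(I)$ then follows from the last sentence of Theorem~\ref{stability_sobolevspaces}: stability on $W^{1,p}(I)$ is equivalent to stability on $W^{1,p}_*(I)$ together with $h(0)<0$. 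Since $1-1/p\ge 0$ for every $p\ge 1$, the condition $h(0)<0$ already implies $h(0)\le 1-1/p$, so only $h(0)<0$ remains.

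No step of this program presents a real obstacle: once the reduction to case (2) is in place, the proof consists of computing a single integral and invoking $\int_{ye^{-t}}^{y}ds/s=t$. The only piece of genuine analytic content is the splitting $h(s)=h(0)+(h(s)-h(0))$ in part (a), which segregates the logarithmically divergent term from a remainder tamed by the assumed $L^1$-integrability of $(h(s)-h(0))/s$.
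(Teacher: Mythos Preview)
Your proposal is correct and follows essentially the same route as the paper: both reduce to case~(2) of Theorems~\ref{stability_rho1} and~\ref{stability_sobolevspaces} via the observation that $x\notin\varphi(\bar t,I)$ once $\bar t>-\log x$, and both compute the relevant integral by splitting $h(s)=h(0)+(h(s)-h(0))$ so that the $L^1$ (resp.\ $L^\infty$) hypothesis controls the remainder while $\int_{ye^{-t}}^{y}ds/s=t$ isolates the linear-in-$t$ term. Your write-up is in fact slightly tidier than the paper's in two places: you note that the $L^1$ bound on $(h(s)-h(0))/s$ gives a two-sided bound on the first summand (making the ``if and only if'' transparent), and you spell out why $h(0)<0$ alone suffices for stability on $W^{1,p}(I)$, namely because $1-1/p\ge 0$.
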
 

\begin{proof}
For $F(x)=-x$ we have $\varphi(t,x)=xe^{-t}$ thus,  for every $x\in (0,1)$ we get that $x\notin \varphi(t, \Omega)$ if  $t>-\log{x}$ . 

Proof of part a).\\
For $\lambda$-a.e. $x\in I$ and for all $t\geq 0$,
\[\int_x ^{xe^{-t}} \frac{\,h(y)+\frac{1}{p}}{-y} dy =\int_{xe^{-t}} ^x\frac{\,h(y)-h(0)+h(0)+\frac{1}{p}}{y} dy.\]

Since \[[0,1]\rightarrow\R,\qquad x\mapsto \frac{h(x)-h(0)}{x}\] belongs to $L^1(0,1)$, we obtain for some constant $K\geq 0$ that
\[\int_{xe^{-t}} ^x\frac{\,h(y)-h(0)+h(0)+\frac{1}{p}}{y} dy \leq K + \int_{xe^{-t}} ^x\frac{h(0)+\frac{1}{p}}{y} dy.\]	

By Theorem~\ref{stability_rho1}(2), $\mathcal{T}_h$ is stable if and only if there exists  $C\in\R$ such that for $\lambda$-a.e. $x\in I$ and for all $t\geq 0$ 
\[\int_{xe^{-t}} ^x\frac{h(0)+\frac{1}{p}}{y} dy \leq C.\]

Observe that \[\int_{xe^{-t}} ^x\frac{h(0)+\frac{1}{p}}{y} dy=\left(h(0)+\frac{1}{p}\right)t.\]

Then $\mathcal{T}_h$ is stable if and only if $h(0)\leq -\frac{1}{p}$.

Proof of part b).\\
It follows with straightforward calculations from Theorem~\ref{stability_sobolevspaces}(2). \end{proof}
%\end{example}

\subsection{Generalized von Foerster-Lasota equation}%\label{stabilitylast_ex}
 
%\begin{example}
	Let us consider $I=(0,1)$, $h\in C([0,1])$, $r>1$  and the first order partial differential equation
\[\frac{\partial}{\partial\,t}u(t,x)+x^r\frac{\partial}{\partial\,x}u(t,x)=h(x)\,u(t,x),\quad t\geq 0,\, 0<x< 1\]
with the initial condition
\[u(0,x)=v(x),\quad 0 <x< 1,\]
where $v$ is a given function. 
Denote by $\mathcal{T}_{r,h}$ the $C_0$-semigroup on $L^p(0,1)$ associated with $F(x)=-x^r$ and $h$ and by  $\mathcal{S}_{r,h}$ its restriction to $W^{1,p}(0,1)$. 

\begin{theorem}
$\ $
\begin{itemize}
  \item[a)] Let $h\in C[0,1]$  and assume that the function 
  \[[0,1]\rightarrow\R,\qquad x\mapsto \frac{h(x)-x^{r-1}h(0)}{x^r}\]
  belongs to $L^1(0,1)$. Then the following properties of $\mathcal{T}_{r,h}$ on $L^p(0,1)$ are equivalent.
  \begin{itemize}
    \item[i)] $\mathcal{T}_{r,h}$ is stable.
    \item[ii)] $h(0)\leq \frac{-r}{p}$.
  \end{itemize}
  \item[b)] Let $h\in W^{1,\infty}(0,1)$  and assume that the function 
  \[[0,1]\rightarrow\R,\qquad x\mapsto \frac{h(x)-h(0)}{x^r}\]
  belongs to $L^\infty(0,1)$. Then 
  the following are equivalent.
  \begin{itemize}
    \item[i)] $\mathcal{S}_{r,h}$ is stable on $W^{1,p}_*(0,1)$.
    \item[ii)] $h(0)\leq 0$.
  \end{itemize}
  Moreover, $\mathcal{S}_{h}$ is stable on $W^{1,p}(I)$ if and only if  $h(0)<0$.  
\end{itemize}
\end{theorem}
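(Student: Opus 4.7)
The plan is to invoke the one-dimensional characterizations already established: Theorem \ref{stability_rho1}(2) for part (a) and Theorem \ref{stability_sobolevspaces}(2) for part (b). In both cases, the first task is to verify the escape-from-image hypothesis ``for every $x\in I$ there exists $\overline{t}>0$ with $x\notin\varphi(\overline{t},I)$''. Solving $\dot x = -x^r$ on $(0,1)$ gives the explicit formula $\varphi(t,y)=(y^{1-r}+(r-1)t)^{1/(1-r)}$; since $\varphi(t,1)\to 0$ as $t\to\infty$, for any $x\in(0,1)$ we have $x>\varphi(t,1)\geq\sup\varphi(t,I)$ once $t$ is large enough. This also shows $\Omega_0\cap(0,1)=\emptyset$ and $F(x)\neq 0$ on $I$, so the applicability hypotheses of both theorems are met; in particular $\rho=1$ is $p$-admissible with $M=1$ and $\omega=ph(0)+(p-1)r\|x^{r-1}\|_\infty$.

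For part (a), with $F(s)=-s^r$ and $F'(s)=-rs^{r-1}$, the key quantity is
\[
\int_y^{\varphi(t,y)}\frac{h(s)-\frac{1}{p}F'(s)}{F(s)}\,ds
=-\int_y^{\varphi(t,y)}\frac{h(s)}{s^r}\,ds-\frac{r}{p}\log\!\frac{\varphi(t,y)}{y}.
\]
The idea is to use the hypothesis by splitting $h(s)=[h(s)-s^{r-1}h(0)]+s^{r-1}h(0)$, so that
\[
-\int_y^{\varphi(t,y)}\frac{h(s)}{s^r}\,ds
=-\int_y^{\varphi(t,y)}\frac{h(s)-s^{r-1}h(0)}{s^r}\,ds-h(0)\log\!\frac{\varphi(t,y)}{y}.
\]
Since $\varphi(t,y)\leq y$ and $(h(x)-x^{r-1}h(0))/x^r\in L^1(0,1)$, the first integral is bounded uniformly in $t$ and $y$. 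Combining, the relevant expression equals $-\bigl(h(0)+\frac{r}{p}\bigr)\log(\varphi(t,y)/y)$ up to a bounded remainder, and as $t\to\infty$ we have $\log(\varphi(t,y)/y)\to -\infty$ (logarithmically). Theorem \ref{stability_rho1}(2) therefore yields stability iff $h(0)+\frac{r}{p}\leq 0$.

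For part (b), Theorem \ref{stability_sobolevspaces}(2) reduces the problem to bounding
\[
\int_y^{\varphi(t,y)}\frac{h(0)-\bigl(\tfrac{1}{p}-1\bigr)F'(s)}{F(s)}\,ds
= -h(0)\int_y^{\varphi(t,y)}\frac{ds}{s^r}-r\bigl(\tfrac{1}{p}-1\bigr)\log\!\frac{\varphi(t,y)}{y}.
\]
Here the explicit flow comes in decisively: the identity $\varphi(t,y)^{1-r}-y^{1-r}=(r-1)t$ gives $\int_y^{\varphi(t,y)}s^{-r}\,ds=-t$, so the expression simplifies to $h(0)t-r\bigl(\tfrac{1}{p}-1\bigr)\log(\varphi(t,y)/y)$. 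Since $\log(\varphi(t,y)/y)$ tends to $-\infty$ only logarithmically in $t$ while the first term is linear in $t$, the quantity is bounded above in $t$ iff $h(0)\leq 0$. For the moreover part, one uses the decomposition $W^{1,p}(I)=W^{1,p}_*(I)\oplus\mathrm{span}\{\1\}$ exactly as in the proof of Theorem \ref{stability_sobolevspaces}: stability on $\mathrm{span}\{\1\}$ is equivalent to $h(0)<0$, and intersected with the $W^{1,p}_*$-condition $h(0)\leq 0$ this gives $h(0)<0$.

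The only mildly delicate step is the splitting in part (a), where one must identify $s^{r-1}h(0)$ (not $h(0)$) as the correct counter-term to extract from $h(s)$, matching the $L^1$ hypothesis; once this is seen the remaining computations are routine. Part (b) is essentially bookkeeping using the explicit flow, and in particular does not require the $L^\infty$ hypothesis on $(h(x)-h(0))/x^r$ for the integral estimates, that hypothesis being needed only to apply Theorem \ref{stability_sobolevspaces}.
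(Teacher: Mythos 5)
Your proof is correct and follows essentially the same route as the paper: verify the escape condition for $\varphi(t,y)=((r-1)t+y^{1-r})^{1/(1-r)}$, then apply Theorem~\ref{stability_rho1}(2) with the splitting $h(s)=[h(s)-s^{r-1}h(0)]+s^{r-1}h(0)$ for part (a), and Theorem~\ref{stability_sobolevspaces}(2) together with $\int_y^{\varphi(t,y)}s^{-r}\,ds=-t$ for part (b), finishing the ``moreover'' claim via the decomposition $W^{1,p}(I)=W^{1,p}_*(I)\oplus\operatorname{span}\{\1\}$. The only blemish is the admissibility constant $\omega=ph(0)+(p-1)r\|x^{r-1}\|_\infty$, which is the one for the conjugated semigroup $\mathcal T_{F,F'+h(0)}$ relevant to part (b) and does not justify admissibility for a general $h\in C[0,1]$ in part (a) (there $M=1$, $\omega=p\|h\|_\infty+r$ works, using $\partial_2\varphi(t,x)\geq e^{-rt}$); this is a side remark that does not affect the main argument.
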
 

\begin{proof}
Recall that $F(x)=-x^r$ thus,  $\varphi(t,x)=((r-1)t+x^{1-r})^\frac{1}{1-r}$ and
\[\forall\,t\geq 0, x\in (0,1]:\qquad \partial_2\varphi(t,x)=x^{-r}((r-1)t+x^{1-r})^{\frac{r}{1-r}}\]

 For all $x\in (0,1)$,  choosing $\overline t>\frac{x^{1-r}-1}{r-1}$ we have $x\notin \varphi(\overline t, (0,1))$. By applying Theorem~\ref{stability_rho1}(2) and Theorem~\ref{stability_sobolevspaces}(2) we get  the proofs of a) and b) respectively. In fact, for the proof of part a) since $[0,1]\rightarrow\R,\qquad x\mapsto \frac{h(x)-x^{r-1}h(0)}{x^r}$ belongs to $L^1(0,1)$, if we denote by $K$ its norm, we obtain for $y\in (0,1)$ and $t\geq 0$

\begin{align*}
		\int_y ^{\varphi(t,y)} \frac{h(s)-\frac{1}{p}F'(s)}{F(s)}ds= & \int_{\varphi(t,y)} ^y \frac{h(s)+\frac{r}{p}s^{r-1}}{s^r}ds\\
		= & \int_{\varphi(t,y)} ^y \frac{h(s)+s^{r-1}h(0)}{s^r}ds+\int_{\varphi(t,y)} ^y \frac{h(0)+\frac{r}{p}}{s}ds\\
		\leq & K +\int_{\varphi(t,y)} ^y \frac{h(0)+\frac{r}{p}}{s}ds\\
		= & K+ \left(h(0)+\frac{r}{p}\right)\log{\left(\frac{y}{\varphi(t,y)}\right)}\\
		= & K+ \frac{1}{r-1}\left(h(0)+\frac{r}{p}\right)\log{\left(1+(r-1)ty^{r-1}\right)}
\end{align*}

Then $\mathcal{T}_{r,h}$ is stable on $L^p(0,1)$ if and only if $\left(h(0)+\frac{r}{p}\right)\leq 0$, by Theorem~\ref{stability_rho1}(2).

Concerning part b), by using Theorem~\ref{stability_sobolevspaces}(2) and by observing that, since $p\geq 1$, $r>1$, and $\frac{y}{\varphi(t,y)}=(1+(r-1)ty^{r-1})^{\frac{1}{r-1}}$,  we have for some $C>0$ 
\[\left(\frac 1p -1\right)r\log\left(\frac{y}{\varphi(t,y)}\right) \leq 0 ,\]
we get that 
\begin{align*}
		\int_y^{\varphi(t,y)}\frac{h(0)-\left(\frac 1 p -1\right) F'(s)}{F(s)}ds \leq & -\frac{h(0)}{r-1}\left(y^{1-r} - \varphi(t,y)^{1-r}\right)+ C \\
		= & h(0)t + C.
\end{align*}

Finally, $S_{r,h}$ is stable on $W^{1,p} _* (0,1)$ if and only if $h(0)\leq 0$, by Theorem~\ref{stability_sobolevspaces}(2). Of course, $S_{r,h}$ is stable in $W^{1,p}$ if and only if $h(0)< 0$ by the same Theorem.

\end{proof}

\section*{Appendix}

We would like now to compare the previous stability conditions with hypercyclicity conditions.  We recall that a $C_0$-semigroup $\mathcal T=(T_t)_{t\geq0}$ on a separable Banach space $X$,
  is said to be hypercyclic if  there exist $x\in X$, called hypercyclic vector, such that its orbit $\{T(t)x\ :\ t\geq 0\}$ is dense in $X$. We refer to the recent monographs \cite{bayart_matheron2009dynamics, grosse-erdmann_peris2011linear} for a complete reference on this topic.
The hypercyclic behaviour of the weighted composition semigroups $\mathcal T_{F,h}$ has been characterized by Kalmes in \cite{kalmes2007hypercyclic} (see also \cite{kalmes2016asimple, kalmes2015aremark} for further hypercylicity related conditions on the same semigroups):

\begin{theorem}[\cite{kalmes2007hypercyclic}]\label{hyp}
Let $\Omega\subseteq\R$ be open, $F, h$ satisfying (H1)-(H3),  $\rho:\Omega\rightarrow (0,\infty)$ be a measurable function which is $p$-admissible for $F$ and $h$.
For the $C_0$-semigroup $\mathcal{T}_{F,h}$ on $L^p_\rho(\Omega)$ the following are equivalent.
	\begin{itemize}
		\item[i)] $\mathcal{T}_{F,h}$ is hypercyclic.
		
		\item[ii)] $\lambda(\Omega_0)=0$ and for every $m\in\N$ for which there are $m$ different connected components $C_1,\ldots,C_m$ of $\Omega_1$, for $\lambda^m$-almost all choices of $(x_1,\ldots,x_m)\in\Pi_{j=1}^m C_j$ there is a sequence of positive numbers $(t_n)_{n\in\N}$ tending to infinity such that
		\[\forall\,1\leq j \leq m:\;\lim_{n\rightarrow\infty}\rho_{t_n,p}(x_j)=\lim_{n\rightarrow\infty}\rho_{-t_n,p}(x_j)=0, \]
	where $\rho_{-t,p}:\Omega\rightarrow [0,\infty),\qquad \rho_{-t,p}(x):=|h_t(x)|^{-p}\rho(\varphi(t,x))\partial_2\varphi(t,x).$
	\end{itemize}
	
\end{theorem}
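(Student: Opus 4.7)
The plan is to apply the Hypercyclicity Criterion together with Birkhoff's transitivity theorem, exploiting that $\Omega_1\subseteq\R$ is an at most countable disjoint union of open intervals $(C_k)_k$ (its connected components), each of which is invariant under the semiflow.

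For the necessity of $\lambda(\Omega_0)=0$: since $\varphi(t,\Omega_i)\subseteq\Omega_i$ for $i=0,1$, the space $L^p_\rho(\Omega)$ splits as the direct sum $L^p_\rho(\Omega_0)\oplus L^p_\rho(\Omega_1)$ and $\mathcal{T}_{F,h}=\mathcal{T}^0\oplus\mathcal{T}^1$ accordingly, as already noted in the proof of Theorem~\ref{characterization stability_2}. If $(f_0,f_1)$ is a hypercyclic vector, then by continuity of the coordinate projection $f_0$ is hypercyclic for $\mathcal{T}^0$. But $T^0(t)g(x)=e^{th(x)}g(x)$ preserves pointwise signs, so every orbit under $\mathcal{T}^0$ is contained in a proper closed cone; this forces $L^p_\rho(\Omega_0)=\{0\}$, i.e.\ $\lambda(\Omega_0)=0$.

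For the implication \textup{(ii)}$\Rightarrow$\textup{(i)}, I would apply the Hypercyclicity Criterion with the dense subspace $Y_0:=\spa\{\chi_A : A\ \text{a compact subinterval of some}\ C_k\}$, which is dense in $L^p_\rho(\Omega)$ by $\lambda(\Omega_0)=0$ and the countability of $(C_k)_k$. Define the pointwise right inverse
\[ (S(t)f)(y):=\chi_{\varphi(t,\Omega)}(y)\,h_t(\varphi(-t,y))^{-1}f(\varphi(-t,y)), \]
so that $T_{F,h}(t)S(t)=I$ on $L^p_\rho(\varphi(t,\Omega))$ and the standard change of variables (as in the proof of Theorem~\ref{characterization stability_1}) gives
\[ \|T_{F,h}(t)f\|_p^p=\int_\Omega|f|^p\rho_{t,p}\,dx,\qquad \|S(t)f\|_p^p=\int_\Omega|f|^p\rho_{-t,p}\,dx. \]
For $f=\sum_{j=1}^m a_j\chi_{A_j}$ with $A_j\subset C_j$, fix $x_j\in C_j$; by Lemma~\ref{crucial lemma}(1), both $\rho_{t,p}$ and $\rho_{-t,p}$ are comparable on $A_j$ to their value at $x_j$, uniformly in $t$. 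The hypothesis produces a sequence $t_n\to\infty$ with $\rho_{t_n,p}(x_j)\to 0$ and $\rho_{-t_n,p}(x_j)\to 0$ for every $j$, whence $\|T_{F,h}(t_n)f\|_p+\|S(t_n)f\|_p\to 0$, verifying the Criterion on $Y_0$.

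For the converse \textup{(i)}$\Rightarrow$\textup{(ii)}, Birkhoff transitivity is the natural tool. Fix $m$ distinct components $C_1,\dots,C_m$ and, for points $x_j\in C_j$, consider decreasing neighborhoods $U_n,V_n$ in $L^p_\rho(\Omega)$ of normalized characteristic functions of shrinking intervals around the $x_j$'s. Transitivity produces times $t_n\to\infty$ and vectors $g_n\in U_n$ with $T_{F,h}(t_n)g_n\in V_n$, which via the explicit form of $T_{F,h}$ (and of $S$, used to handle the second half of the condition) and Lemma~\ref{crucial lemma}(1) applied in reverse translates into $\rho_{t_n,p}(x_j)\to 0$ and $\rho_{-t_n,p}(x_j)\to 0$. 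The main obstacle is the quantifier structure of \textup{(ii)}: one must show that a single sequence $(t_n)$ works for $\lambda^m$-almost every $m$-tuple and for every $m$. The plan for this is a diagonal argument over a countable basis of open sets in $L^p_\rho(\Omega)$, combined with a Fubini-type measurability argument to excise the union, over $m$ and over the countable basis, of the relevant $\lambda^m$-null sets of bad starting $m$-tuples.
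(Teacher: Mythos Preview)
The paper does not prove this theorem: it is quoted verbatim from \cite{kalmes2007hypercyclic} and used only as a black box in the Appendix (in the proof of Theorem~\ref{nothyp_stable} and the discussion preceding it). There is therefore no ``paper's own proof'' to compare your proposal against.

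That said, your outline follows the strategy one would expect for such a result and is close in spirit to Kalmes' original argument: the splitting $L^p_\rho(\Omega)=L^p_\rho(\Omega_0)\oplus L^p_\rho(\Omega_1)$ to force $\lambda(\Omega_0)=0$, the Hypercyclicity Criterion on step functions for (ii)$\Rightarrow$(i), and a transitivity argument for (i)$\Rightarrow$(ii). Two points deserve care if you actually carry this out. First, Lemma~\ref{crucial lemma}(1) as stated here is one-sided (lower bound at $\alpha$, upper bound at $\beta$), so ``comparable on $A_j$ to their value at $x_j$'' is not immediate for an arbitrary interior point $x_j$; you need to pick $x_j$ at the correct end of $A_j$ (or invoke the full two-sided estimate from \cite{kalmes2007hypercyclic}), and you need the analogous estimate for $\rho_{-t,p}$, which is not recorded in the present paper. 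Second, in the Hypercyclicity Criterion direction a single $f\in Y_0$ may involve several $A_j$ contained in the \emph{same} component $C_k$, so your indexing ``$A_j\subset C_j$ with the $C_j$ distinct'' is too restrictive; you should allow repetitions of components and then reduce to the hypothesis by taking one representative point per interval.
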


Clearly if a semigroup is hypercyclic, then it cannot be stable and, in general, a semigroup can be not stable and not hypercyclic. Indeed, if we consider a semigroup $T_{F,h}$ such that $\lambda(\Omega_0)>0$ and $h(x)>0$ on $\Omega_0$ then we get a semigroup which is not stable by Theorem~\ref{characterization stability_2} and not hypercyclic by Theorem \ref{hyp}.
Nevertheless, there are cases in which stability and not hypercyclicity are equivalent.  This happens, under suitable assumptions,  e.g. for the von Foerster Lasota semigroup,  as it has been proved by Dawidowicz and Poskrobko in \cite{dawidowicz_poskrobko2008onperiodic}.

This result will be covered by the next theorem, in the particular case $F=-x$.  

\begin{theorem}\label{nothyp_stable}
Let $\Omega=(\alpha, \beta) \subseteq\R$ be a bounded interval, $F\in C^1([\alpha,\beta])$ satisfying (H2)-(H3). Assume $F$ decreasing, $F(x)<0$ for each $x\in (\alpha,\beta]$, $F(\alpha)=0$ and such that 
 \[ \forall x\in \Omega_1,\ \exists \ \overline t>0 \ : \qquad x\notin \varphi(\overline t, \Omega).\]
  Moreover, let  $h=-\lambda F'$ for some $\lambda\in\R$.

 Then, for the $C_0$-semigroup $\mathcal{T}_{F,h}$ on $L^p(\Omega)$ the following are equivalent.
  \begin{itemize}
    \item[(i)] $\mathcal{T}_{F,h}$ is  stable.
    \item[(ii)] $\mathcal{T}_{F,h}$ is not hypercyclic.
    \item[(iii)] $\lambda\leq -\frac 1 p$
    \end{itemize}
\end{theorem}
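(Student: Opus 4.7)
The plan is to reduce both the stability criterion (Theorem~\ref{stability_rho1}(2)) and the hypercyclicity criterion (Theorem~\ref{hyp}) to the single inequality $\lambda\leq -1/p$, by computing closed forms for the relevant quantities. First I record the geometry: since $F(\alpha)=0$, $F<0$ on $(\alpha,\beta]$ and $\alpha\notin\Omega$, we have $\Omega_0=\emptyset$ and $\Omega_1=\Omega$, a connected set. For each $x\in\Omega$ the trajectory $t\mapsto\varphi(t,x)$ is strictly decreasing (because $\dot\varphi=F(\varphi)<0$) and bounded below by $\alpha$; its limit is a zero of $F$ in $[\alpha,\beta]$, hence must equal $\alpha$. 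Consequently $|F(\varphi(t,x))|\to 0$ as $t\to+\infty$.

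The key analytic step is to exploit the identity $\frac{d}{ds}\log|F(\varphi(s,x))|=F'(\varphi(s,x))$, coming from $\dot\varphi=F(\varphi)$, in order to bring every integral appearing in the two criteria into closed form. Substituting $h=-\lambda F'$ into Lemma~\ref{crucial lemma}(2) should yield
\[\rho_{t,p}(x)=\chi_{\varphi(t,\Omega)}(x)\left(\frac{|F(\varphi(-t,x))|}{|F(x)|}\right)^{p\lambda+1},\qquad \rho_{-t,p}(x)=\left(\frac{|F(\varphi(t,x))|}{|F(x)|}\right)^{p\lambda+1},\]
while the boundedness integrand of Theorem~\ref{stability_rho1}(2) collapses to
\[\int_y^{\varphi(t,y)}\frac{h(s)-F'(s)/p}{F(s)}\,ds=-\left(\lambda+\frac{1}{p}\right)\bigl[\log|F(\varphi(t,y))|-\log|F(y)|\bigr].\]

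With these formulas the rest is sign analysis. For $(i)\Leftrightarrow(iii)$: the bracket above tends to $-\infty$ as $t\to+\infty$, so the expression is bounded above uniformly in $t$ precisely when $-(\lambda+1/p)\leq 0$, i.e.\ $\lambda\leq -1/p$. For $(ii)\Leftrightarrow(iii)$: the support hypothesis combined with the semigroup inclusion $\varphi(t,\Omega)=\varphi(\bar t,\varphi(t-\bar t,\Omega))\subseteq\varphi(\bar t,\Omega)$ for $t\geq\bar t$ forces $\rho_{t,p}(x)=0$ eventually, so Theorem~\ref{hyp} (applied with $m=1$, since $\Omega_1$ is connected) reduces to the requirement $\rho_{-t_n,p}(x)\to 0$ along some sequence; the closed form shows this holds iff $p\lambda+1>0$, yielding hypercyclicity iff $\lambda>-1/p$. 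The main technical point I anticipate is precisely propagating the support hypothesis from a single $\bar t$ to all larger times and the bookkeeping for when $\rho_{-t,p}(x)$ tends to zero versus remains constant or blows up; both are handled cleanly by the closed formulas but deserve an explicit write-out.
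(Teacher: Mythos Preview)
Your proposal is correct and follows essentially the same route as the paper: both proofs compute the closed form
\[
\int_y^{z}\frac{h(s)-\tfrac1p F'(s)}{F(s)}\,ds=-\Bigl(\lambda+\tfrac1p\Bigr)\log\frac{|F(z)|}{|F(y)|}
\]
and then do sign analysis, invoking Theorem~\ref{stability_rho1}(2) for stability and the $\rho_{\pm t,p}$ criterion (Theorem~\ref{hyp}) for hypercyclicity. The paper treats the borderline $\lambda=-\tfrac1p$ separately and phrases the hypercyclicity step as a limit $y\to\alpha$ of the above integral rather than via your explicit formula for $\rho_{-t,p}$, but these are the same computation.

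One slip to fix in your write-up: in the stability paragraph you conclude ``bounded above \dots\ precisely when $-(\lambda+1/p)\le 0$, i.e.\ $\lambda\le -1/p$.'' The final inequality $\lambda\le -1/p$ is the right answer, but the intermediate condition should read $-(\lambda+1/p)\ge 0$: since the bracket $\log|F(\varphi(t,y))|-\log|F(y)|$ tends to $-\infty$, the product stays bounded above exactly when the prefactor $-(\lambda+1/p)$ is nonnegative. Also, when you write the full proof, remember to note (as the paper does, citing \cite[Lemma~19]{aroza_kalmes_mangino2013chaotic}) that $\rho\equiv 1$ is $p$-admissible under these hypotheses, so that Theorem~\ref{stability_rho1} applies.
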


  \begin{proof} Observe that $\rho=1$ is p-admissible for $F$ and $h$ by \cite[Lemma 19]{aroza_kalmes_mangino2013chaotic}. If $\lambda=-\frac 1 p$, then clearly the semigroup is stable and not hypercyclic. Then assume  that $\lambda\not=-\frac 1 p$.
  
  First observe that for every $y,z\in (\alpha, \beta)$ 
  \[ \int_y^z \frac{h(s)-\frac 1 pF'(s)}{F(s)}ds=-\left(\lambda + \frac 1 p\right)   \log\frac{F(z)}{F(y)}.\]
  In particular,  it follows that for every $z\in (\alpha,\beta)$ 
  \[\exists\  \lim_{y\to\alpha}  \int_y^z \frac{h(s)-\frac 1 pF'(s)}{F(s)}ds = -\left(\lambda + \frac 1 p\right) (+\infty)\]
  thus $\mathcal{T}_{F,h}$ is hypercyclic if and only if $\lambda>-\frac 1 p$.
  
  On the other hand, $\mathcal{T}_{F,h}$ is stable if and only if there exists $C>0$ such that 
  \[ \forall x\in (\alpha,\beta),\ \forall t>0:\ \ \int_{x}^{\varphi(t,x)}  \frac{h(s)-\frac 1 pF'(s)}{F(s)}ds\leq C,\]
  that is 
  \[ \forall x\in (\alpha,\beta),\ \forall t>0:\ \ -\left(\lambda + \frac 1 p\right)   \log\frac{F(\varphi(t,x))}{F(x)}\leq C.\]
  Being $F<0$ in $(\alpha,\beta)$, we have  that $\varphi(\cdot,x)$ is strictly decreasing, hence $\varphi(t,x)<x$ for every $t>0$. Hence, since $F$ is decreasing,
 \[ \forall x\in(\alpha,\beta), \ t >0:\ \  \frac{F(\varphi(t,x))}{F(x)} \leq 1.\]
 Thus, if $\lambda< -\frac 1 p$, then
  \[ \forall x\in (\alpha,\beta),\ \forall t>0:\ \ -\left(\lambda + \frac 1 p\right)   \log\frac{F(\varphi(t,x))}{F(x)}\leq 0,\]
  and so $\mathcal{T}_{F,h}$ is stable.
  \end{proof}
  
	\begin{remark}
		{\rm Of course it is possible to characterize stability for $\mathcal{T}_{F,h}$ if $F$ is strictly positive, taking into account that $\varphi$ would be increasing. In this case, we only need to change condition (iii) by $\lambda\geq -\frac 1 p$.}
	\end{remark}

\end{document}